\newcommand{\der}[1]{\frac{\mathrm{d}#1}{\mathrm{d}t}}
\newcommand{\ith}[1]{\int#1\;\mathrm{d}\theta}
\newcommand{\itau}[1]{\int#1\;\mathrm{d}\tau}
\newcommand{\dm}{\delta_\textrm{min}}
\newcommand{\pmx}{P_\textrm{max}}
\newcommand{\doma}{\operatorname{Dom}(\mathcal{A})}
\renewcommand{\t}{\hat{T}}
\newcommand{\ih}{\hat{i}}
\renewcommand{\v}{\hat{V}}
\renewcommand{\a}{\hat{A}}
\newcommand{\dhdv}{\frac{\partial h_j}{\partial V}\big(\tfrac{\lambda_j}{d_j},0\big)}
\newcommand{\hj}{\overline{h}_j}
\newcommand{\lhj}[1][T_j]{\lim\limits_{V\to0}\frac{h_j(T_j^0,V)}{h_j(#1,V)}}
\newcommand{\Hss}[1][T_j]{\frac{h_j(T_j^*,V^*)}{h_j(#1,V^*)}}
\newcommand{\Hsh}[1][T_j]{\frac{h_j(\t_j,\v)}{h_j(#1,\v)}}
\newtheorem{teo}{Theorem}[section]
\newtheorem{lem}[teo]{Lemma}
\theoremstyle{remark}
\numberwithin{equation}{section}
\title{Global properties of an age-structured virus model with saturated antibody immune response, multi-target cells and general incidence rate}
\author{Ángel Cervantes-Pérez, Eric Ávila-Vales}
\date{}
\begin{document} 
\maketitle
\begin{center} {\small Facultad de Matem\'aticas, Universidad Aut\'onoma de Yucat\'an, \\ Anillo Perif\'erico Norte, Tablaje 13615, C.P. 97119, M\'erida, Yucat\'an, Mexico}
\end{center}
\bigskip

\begin{abstract}
    Some viruses, such as human immunodeficiency virus, can infect several types of cell populations. The age of infection can also affect the dynamics of infected cells and production of viral particles. In this work, we study a virus model with infection-age and different types of target cells which takes into account the saturation effect in antibody immune response and a general non-linear infection rate. We construct suitable Lyapunov functionals to show that the global dynamics of the model is completely determined by two critical values: the basic reproduction number of virus and the reproductive number of antibody response.
\end{abstract}

\section{Introduction}

In recent times, several mathematical models have been proposed in order to try to understand the mechanism of virus infections. These models often describe the changes through time in the concentration of infected and uninfected target cells and viral particles in the blood of an infected individual~\cite{nelson2004age,rong2007mathematical,duan2017global,wang2017age,wang2017note,yang2015global,tian2015stability,wang2016stability,wang2016constructing,elaiw2010global,elaiw2012global,elaiw2013global,wang2013class,culshaw2000delay,li2007asymptotic,yang2015global,georgescu2006global}. Modelling the effect of antibody immune response in the neutralization of virus is a very important topic for research since it can provide useful insights into the dynamics of the infection and offer suggestions for clinical treatment.

The production of new virions by an infected cell does not occur at a constant rate, and the death rate of infected cells can vary with the time the cell has been infected. Thus, the incorporation of an age structure in models allows us to have a more realistic picture of produced viral particles and the mortality of infected cells; see, for example, the models studied in  \cite{nelson2004age,rong2007mathematical,duan2017global,wang2017age,wang2017note,yang2015global}.

A recent work by Duan and Yuan~\cite{duan2017global} introduced a model with age-structure and antibody immune response, described by the system
\begin{equation*}
\begin{aligned}
	\der{T(t)}                                                                          & = \lambda-dT(t)-\frac{\beta T(t)V(t)}{1+\alpha V(t)}     \\
	\frac{\partial i(\theta,t)}{\partial\theta}+\frac{\partial i(\theta,t)}{\partial t} & = -\delta(\theta)i(\theta,t)                             \\
	\der{V(t)}                                                                          & = \ith{_0^\infty P(\theta)i(\theta,t)} - cV(t)-qA(t)V(t) \\
	\der{A(t)}                                                                          & = \frac{kA(t)V(t)}{h+A(t)}-bA(t),
\end{aligned}
\end{equation*}
with the initial and boundary conditions
\[i(0,t)=\frac{\beta T(t)V(t)}{1+\alpha V(t)},\quad
T(0)=T_0,\quad
i(\theta,0)=i_0(\theta),\quad
V(0)=V_0,\quad
A(0)=A_0.\]
In this model, $T,i,V,A$ denote the concentration of uninfected susceptible host cells, infected host cells, free virus particles, and antibody responses released from B cells, respectively. The variable $\theta$ is the infection age, i.e., the time that has elapsed since a virion has infected a cell, while $i(\theta,t)$ is the density of infected T cells with infection age $\theta$ at time $t$. The functions $P(\theta)$ and $\delta(\theta)$ are respectively the time-since-infection structured virion production rate and the death rate of productively infected cells. The authors in~\cite{duan2017global} showed that the global dynamics of this model is completely determined by the basic reproduction number $R_0$ and the viral reproductive number $R_1$.

Most mathematical models for human immunodeficiency virus (HIV) focus on the infection of CD4$^+$ T cells. However, macrophages and dendritic cells are also susceptible to be infected with HIV~\cite{wang2017note,wang2017age}. Therefore, we need to incorporate in our virus dynamic models a multi-group component to study virus infection in different populations of cells. Viral infection models dealing with the interaction of virus with more than one class of target cells have been studied in \cite{tian2015stability,wang2016stability,wang2016constructing,elaiw2010global,elaiw2012global,elaiw2013global,wang2013class} and references cited therein.

When modelling the rate of infection of susceptible target cells by contact with virus, the bilinear incidence term $\beta TV$ is used frequently in literature~\cite{nelson2004age,rong2007mathematical,culshaw2000delay}. However, there is certain debate that this rate is insufficient to realistically describe the infection process. Some authors have incorporated in their models several types of non-linear incidence functions, including saturated Holling type II incidence $\beta TV/(1+\alpha V)$ \cite{duan2017global,wang2017note,li2007asymptotic}, Beddington-DeAngelis functional response $\beta TV/(1+aT+bV)$ \cite{yang2015global,tian2015stability}, or even more general functions, e.g., of the forms $c(T)f(V)$ \cite{georgescu2006global} or $h(T,V)$ \cite{wang2017age,wang2016stability,wang2016constructing}.

In this paper, we propose a viral infection model that includes age-structure, multi-target cells, and a general non-linear rate of viral infection. We establish the global properties of its equilibria using semigroup methods, uniform persistence and the construction of Lyapunov functionals.

The rest of this paper is organized as follows: in Section \ref{sec:desc} we present the equations and assumptions for our model, we calculate the basic reproduction number $R_0$ and show boundedness and non-negativity of solutions. In Section \ref{sec:semigr} we rewrite the system as a semilinear Cauchy problem, we prove that the semiflow generated by the system has a global compact attractor and that it is uniformly persistent when $R_0>1$. In Section \ref{sec:stead} we determine the steady states of the system and deal with their local stability. In Section \ref{sec:glob} we construct several Lyapunov functionals to determine the global stability of equilibria. In Section \ref{sec:rel} we present a special case in which the model can be reduced to a system of delay differential equations. In Section \ref{sec:num} we provide some numerical simulations which illustrate the different kinds of global behaviour the model can have. Finally, in Section \ref{sec:concl} we present some concluding remarks.

\section{Description of the model}\label{sec:desc}

We consider a within-host viral dynamics model for HIV with multiple target cell populations, which takes into account the saturation effects of antibody immune response and incorporates a general incidence function $h_j(T_j,V)$. The model is given by the following system of differential equations:
\begin{equation}\label{msys}
\begin{aligned}
	\der{T_j(t)}                                                                            & = \lambda_j-d_jT_j(t)-h_j(T_j(t),V(t))                                    \\
	\frac{\partial i_j(\theta,t)}{\partial\theta}+\frac{\partial i_j(\theta,t)}{\partial t} & = -\delta_j(\theta)i_j(\theta,t)                                          \\
	\der{V(t)}                                                                              & = \sum_{j=1}^n \ith{_0^\infty P_j(\theta)i_j(\theta,t)} - cV(t)-qA(t)V(t) \\
	\der{A(t)}                                                                              & = \frac{kA(t)V(t)}{h+A(t)}-bA(t)
\end{aligned}
\end{equation}
with initial conditions
\begin{equation}\label{ic}
T_j(0)=T_j^0,\quad
i_j(\theta,0)=i_j^0(\theta),\quad
V(0)=V^0,\quad
A(0)=A^0
\end{equation}
and boundary conditions
\begin{equation}\label{bc}
i_j(0,t)=h_j(T_j(t),V(t)).
\end{equation}
This model considers $n$ classes of target cells, denoted by the subscript $j=1,\ldots,n$. For each class, $T_j$ represents the population of uninfected target cells, while $i_j(\theta,t)$ denotes the population of infected cells with infection age $\theta$ at time $t$. We denote by $V$ the concentration of free virus particles, and by $A$ the number of antibody responses released from B cells.

We assume that uninfected cells in the $j$-th class are recruited at a rate $\lambda_j$ and die at a rate $d_j$. Uninfected cells become infected from contact with the virus according to the incidence function $h_j(T_j(t),V(t))$. Infected cells in the $j$-th class die at a rate $\delta_j(\theta)$, which depends on the infection age $\theta$. The function $P_j(\theta)$ represents the viral production rate of an infected cell from the $j$-th class with infection age $\theta$, while the parameter $c$ is the viral clearance rate. The virus is removed by the immune system at a rate $qA$. The antibody responses are generated by contact with the virus at a rate $kA(t)V(t)/(h+A(t))$ and vanish at rate $b$. All parameters are assumed positive.

Furthermore, we assume that the functions $\delta_j$ and $P_j$ satisfy $\delta_j,P_j\in L^\infty_+\big((0,\infty),\mathbb{R}\big)\setminus\{0_{L^\infty}\}$ and $\delta_j(\theta)\ge\dm>0$ for all $\theta\ge0$, $j=1,\ldots,n$. Let
\[\pmx=\max\left\{\operatorname{ess\ sup}P_j\mid j=1,\ldots,n\right\},\]
where $\operatorname{ess\ sup}P_j$ denotes the essential supremum of $P_j(\theta)$ for $\theta\in(0,\infty)$.

We make the following hypotheses on the incidence functions $h_j$, $j=1,\ldots,n$:

\begin{description}
    \item[(H1)] $h_j(T_j,V)=V\hj(T_j,V)$ with $h_j,\hj\in C^2\left(\mathbb{R}^2_+\to\mathbb{R}_+\right)$ and $h_j(0,V)=h_j(T_j,0)=0$ for all $T_j,V\ge0$.
    \item[(H2)] $h_j(T_j,V)$ is strictly increasing with respect to both variables when $T_j,V>0$.
    \item[(H3)] $\frac{\partial\hj}{\partial V}(T_j,V)\le0$ for all $T_j,V\ge0$ and $\hj(T_j,V)\to0$ as $V\to\infty$.
\end{description}

\subsection{Basic reproduction number and boundedness of solutions}

For each infected cell in the $j$-th class, the probability of still being infected after $\theta$ time units is given by
\begin{equation*}
\sigma_j(\theta)=
e^{-\itau{_0^\theta\delta_j(\tau)}},
\end{equation*}
so the total number of virions produced by an infected cell from the $j$-th class in its entire life span is
\[N_j=\ith{_0^\infty P_j(\theta)\sigma_j(\theta)}.\]
Hence the corresponding basic reproduction number for system \eqref{msys} when the $j$-th class is the unique class of target cells is given by
\[R_j=\frac{N_j}{c}\dhdv,\qquad j=1,\ldots,n.\]
Therefore, we can define the basic reproduction number $R_0$ of virus for the model as
\begin{equation}\label{r0}
R_0=\sum_{j=1}^nR_j=\frac1c\sum_{j=1}^nN_j\dhdv,
\end{equation}
which represents the expected number of viruses that one virion produces in a fully uninfected cell population.

The second equation of \eqref{msys} is a linear transport equation with decay, so it can be solved with the initial conditions \eqref{ic} and boundary conditions \eqref{bc} by the method of integration along the characteristic lines $t-\theta=$ constant. The solution is given by
\begin{equation}\label{ij}
i_j(\theta,t)=
\begin{cases}
	h_j\big(T_j(t-\theta),V(t-\theta)\big)\sigma_j(\theta)     & \text{if }\theta<t,    \\
	i_j^0(\theta-t)\frac{\sigma_j(\theta)}{\sigma_j(\theta-t)} & \text{if }\theta\ge t,
\end{cases}
\end{equation}
where $\sigma_j(\theta)=e^{-\itau{_0^\theta\delta_j(\tau)}}$.

We will now prove that the model is biologically well-posed by showing the non-negativity and boundedness properties of the system.

\begin{teo}
    The solutions of system \eqref{msys} with non-negative initial conditions \eqref{ic} and boundary conditions \eqref{bc} remain non-negative for $t\ge0$ and are positively bounded.
\end{teo}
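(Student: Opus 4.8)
The plan is to treat non-negativity and boundedness separately. For non-negativity, I would argue component by component using the explicit structure of the system. The key observation is that along any solution, the $T_j$ equation can be rewritten so that $T_j(t) = T_j^0 e^{-\int_0^t (d_j + \overline{h}_j(T_j(s),V(s))\,\frac{V(s)}{T_j(s)}\cdot\text{(something)})}$ — more cleanly, since $h_j(0,V)=0$ by \textbf{(H1)}, whenever $T_j$ reaches $0$ the term $-h_j(T_j,V)$ vanishes and $\der{T_j} = \lambda_j > 0$, so $T_j$ cannot cross zero from above; hence $T_j(t)\ge 0$, and in fact $T_j(t)>0$ for $t>0$. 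Similarly, for $A$: writing $\der{A} = A\big(\tfrac{kV}{h+A}-b\big)$ shows $A(t) = A^0\exp\int_0^t(\ldots)\ge 0$. For $V$, once we know $i_j\ge 0$ (which follows from \eqref{ij} since $h_j\ge 0$ by \textbf{(H1)} and $\sigma_j>0$, and $i_j^0\ge 0$), the production term $\sum_j\int_0^\infty P_j i_j\,d\theta\ge 0$, while the negative terms are $-(c+qA)V$; so if $V$ hits $0$ then $\der{V}\ge 0$, giving $V(t)\ge 0$. The infected-cell density $i_j(\theta,t)\ge 0$ then follows directly from formula \eqref{ij}.

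For boundedness, I would first bound the target cells: from $\der{T_j}\le \lambda_j - d_j T_j$ we get $\limsup_{t\to\infty} T_j(t)\le \lambda_j/d_j$, and $T_j(t)\le\max\{T_j^0,\lambda_j/d_j\}$ for all $t$. Next I would introduce the functional
\[
N(t) = \sum_{j=1}^n T_j(t) + \sum_{j=1}^n \int_0^\infty i_j(\theta,t)\,\mathrm{d}\theta,
\]
and compute its derivative. Using the PDE for $i_j$, integration by parts, and the boundary condition $i_j(0,t)=h_j(T_j(t),V(t))$, the incidence terms $-h_j(T_j,V)$ from the $T_j$ equations cancel against the influx at $\theta=0$, leaving
\[
\der{N} = \sum_j \big(\lambda_j - d_j T_j\big) - \sum_j \int_0^\infty \delta_j(\theta) i_j(\theta,t)\,\mathrm{d}\theta
\le \sum_j \lambda_j - \eta N(t),
\]
where $\eta=\min\{\dm,\,d_1,\ldots,d_n\}>0$, using $\delta_j(\theta)\ge\dm$. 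This yields a uniform bound on $N(t)$, hence on each $\int_0^\infty i_j(\theta,t)\,\mathrm{d}\theta$. Finally, for $V$: since $P_j\le\pmx$, we have $\der{V}\le \pmx\sum_j\int_0^\infty i_j\,\mathrm{d}\theta - cV \le \pmx\, M - cV$ for a constant $M$ bounding $N$, so $V$ is bounded. For $A$, once $V\le V_{\max}$, from $\der{A}\le A(kV_{\max}/h - b)$ — actually better, whenever $A$ is large, $kV/(h+A)\to 0$, so $\der{A}<0$ for $A$ large; one gets $\limsup A(t) \le$ a constant depending on $V_{\max}$, and a uniform bound via the usual comparison argument.

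The main obstacle is making the formal integration-by-parts computation for $\der{N}$ rigorous, since $i_j$ is only given by the implicit formula \eqref{ij} and is not a priori differentiable in a classical sense; one must either justify the manipulation via the mild/integrated form of the transport equation or restrict to smooth initial data and pass to the limit. A secondary technical point is ensuring the improper integrals $\int_0^\infty i_j(\theta,t)\,\mathrm{d}\theta$ converge — this uses $\delta_j\ge\dm>0$, so $\sigma_j(\theta)\le e^{-\dm\theta}$ decays exponentially, together with the (already established) boundedness of $h_j(T_j,V)$ along orbits and the integrability assumption $i_j^0\in L^1$. Everything else is a routine application of Gronwall-type / differential-inequality comparisons.
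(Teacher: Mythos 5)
Your proposal is correct and follows essentially the same route as the paper: first-crossing/barrier arguments (together with the explicit formula \eqref{ij} for $i_j$ and the exponential formula for $A$) for non-negativity, and for boundedness the same total-cell functional $\sum_{j}\big(T_j+\int_0^\infty i_j(\theta,t)\,\mathrm{d}\theta\big)$ with decay rate $\min\{d_1,\ldots,d_n,\dm\}$, followed by comparison estimates for $V$ and then $A$. The one point where the paper is slightly more careful is the sign of $i_j$ versus $V$: since \eqref{ij} gives $i_j(\theta,t)=h_j\big(T_j(t-\theta),V(t-\theta)\big)\sigma_j(\theta)$ for $\theta<t$, concluding $i_j\ge 0$ from \textbf{(H1)} already presupposes $V\ge 0$ on $[0,t]$, so the paper removes this circularity by arguing at the minimal time at which either $V$ or some $i_j(\cdot,t)$ first leaves the non-negative cone — a small reordering your sketch would need.
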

\begin{proof}
    Let $\big(T_1,i_1,\ldots,T_n,i_n,V,A\big)$ be a solution of \eqref{msys} with non-negative initial conditions. Suppose that $T_j(t)$ loses its non-negativity for some $j$ and let $t_0=\inf\{t\ge0\mid T_j(t)<0\}$. By continuity of solutions, we have $T_j(t_0)=0$ and $\der{T_j}(t_0)\le0$. But the first equation of \eqref{msys} implies that $\der{T_j}(t_0)=\lambda_j>0$, a contradiction. Therefore, no such $t_0$ exists and thus, $T_j(t)>0$ for all $t\ge0$.
    
    Next, we show that $V(t)>0$ and $i_j(\theta,t)\ge0$ for all $t,\theta\ge0$. Define $t_1=\min\big\{\inf\{t\ge0\mid V(t)<0\},\ \inf\{t\ge0\mid i_1(\cdot,t)\notin L^1_+(0,\infty)\},\ \ldots,\ \inf\{t\ge0\mid i_n(\cdot,t)\notin L^1_+(0,\infty)\}\big\}$.
    
    Suppose first that $t_1=\inf\{t\ge0\mid i_j(\cdot,t)\notin L^1_+(0,\infty)\}$ for some $j$. Then \eqref{ij} implies that $i_j(\theta,t_1)\ge0$ for all $\theta\ge0$, which is a contradiction. Hence, $t_1=\inf\{t\ge0\mid V(t)<0\}$. At the time instant $t_1$, we have $\der{V}(t_1)\le0$. However, since $i_j(\cdot,t_1)>0$ for all $j$, the third equation of \eqref{msys} implies that
    \[\der{V}(t_1)=\sum_{j=1}^n \ith{_0^\infty P_j(\theta)i_j(\theta,t_1)}>0,\]
    which leads to a contradiction. Thus, $V(t)>0$ and $i_j(\theta,t)\ge0$ for all $t,\theta\ge0$.
    
    Finally, from the last equation of the model we have
    \[A(t)=A(0)e^{\itau{_0^t\left(\frac{kV(\tau)}{h+A(\tau)}-b\right)}}\ge0.\]
    Therefore, the solution remains non-negative on its maximal interval of existence.
    
    Next, we consider the boundedness of solutions. From the first equation of \eqref{msys}, we have $\der{T_j}\le\lambda_j-d_jT_j$ for $j=1,\ldots,n$. This implies that
    \[\limsup_{t\to\infty}T_j(t)\le\frac{\lambda_j}{d_j}.\]
    Let
    \[G(t)=\sum_{j=1}^n\left(T_j(t)+\ith{_0^\infty i_j(\theta,t)}\right)\]
    be the total number of cells (uninfected and infected) at time $t$. Then
    \begin{align*}
    	\der{G(t)} & =\sum_{j=1}^n\left(\der{T_j(t)}+\ith{_0^\infty\frac{\partial i_j(\theta,t)}{\partial t}}\right)                                                                           \\
    	           & =\sum_{j=1}^n\left[\lambda_j-d_jT_j(t)-h_j(T_j(t),V(t)) + \ith{_0^\infty\left(-\frac{\partial i_j(\theta,t)}{\partial\theta}-\delta_j(\theta)i_j(\theta,t)\right)}\right] \\
    	           & \le\sum_{j=1}^n\left[\lambda_j-d_jT_j(t) - \ith{_0^\infty\delta_j(\theta)i_j(\theta,t)}\right]                                                                            \\
    	           & \le\sum_{j=1}^n\lambda_j-\sum_{j=1}^n\left[d_jT_j(t) + \dm\ith{_0^\infty i_j(\theta,t)}\right],
    \end{align*}
    since $\delta_j(\theta)\ge\dm$. Let $\overline{d}=\min\{d_1,\ldots,d_n,\dm\}$. Then
    \[\der{G(t)}\le\sum_{j=1}^n\lambda_j-\overline{d}\sum_{j=1}^n\left[T_j(t)+\ith{_0^\infty i_j(\theta,t)}\right]=\sum_{j=1}^n\lambda_j-\overline{d}G(t),\]
    and thus,
    \[\limsup_{t\to\infty}\sum_{j=1}^n\ith{_0^\infty i_j(\theta,t)}
    \le\limsup_{t\to\infty}G(t)\le M,\]
    where $M=\left(\sum_{j=1}^n\lambda_j\right)/\overline{d}$.
    
    Now, from the equation for $\der{V}$ in \eqref{msys}, we have
    \begin{align*}
    	\der{V(t)} & \le\sum_{j=1}^n \ith{_0^\infty P_j(\theta)i_j(\theta,t)} - cV(t) \\
    	           & \le\pmx\sum_{j=1}^n \ith{_0^\infty i_j(\theta,t)} - cV(t),
    \end{align*}
    so
    \[\der{V(t)}\le \pmx M-cV(t)\]
    when $t$ is large enough. Thus,
    \[\limsup_{t\to\infty}V(t)\le\frac{\pmx M}{c}.\]
    Finally, from the last equation in \eqref{msys}, we have
    \begin{align*}
    	\der{A(t)}\le kV(t)-bA(t)\le\frac{k\pmx M}{c}-bA(t)
    \end{align*}
    for large $t$, so
    \[\limsup_{t\to\infty}A(t)\le\frac{k\pmx M}{bc},\]
    which completes the proof of the theorem.
\end{proof}\medskip

Let
\begin{align*}
	\Omega=\Bigg\{ & \big(T_1,i_1(\cdot),\ldots,T_n,i_n(\cdot),V,A\big)\in
\prod_{j=1}^n\Big[\mathbb{R}_+\times L^1_+\big((0,\infty),\mathbb{R}\big)\Big]\times\mathbb{R}_+^2\mid\\ & T_j\le\frac{\lambda_j}{d_j},\ \sum_{j=1}^n\left(T_j+\ith{_0^\infty i_j(\theta)}\right)\le M,\ V\le\frac{\pmx M}{c},\ A\le\frac{k\pmx M}{bc}\Bigg\}.
\end{align*}
Then $\Omega$ is a positively invariant set for system \eqref{msys}. From now on, we will always assume that the initial value $\big(T_1^0,i_1^0(\cdot),\ldots,T_n^0,i_n^0(\cdot),V^0,A^0\big)$ is in $\Omega$.

\section{Integrated semigroup formulation}\label{sec:semigr}

We will now reformulate system \eqref{msys} with the initial condition \eqref{ic} as a semilinear Cauchy problem. In order to take into account also the boundary condition \eqref{bc}, we need to expand the state space as follows. Let
\[\mathcal{M}=\mathbb{R}\times\mathbb{R}\times L^1\big((0,\infty),\mathbb{R}\big),\quad
\mathcal{P}=\mathbb{R}\times\{0\}\times L^1\big((0,\infty),\mathbb{R}\big),
\]
\[\mathcal{N}=\mathbb{R}\times\{0\}\times W^{1,1}\big((0,\infty),\mathbb{R}\big),\quad
\mathcal{X}=\left(\prod_{i=1}^n\mathcal{M}\right)\times\mathbb{R}\times\mathbb{R},
\]
where $W^{1,1}$ denotes a Sobolev space. Consider the linear operator $\mathcal{A}:\doma\subset\mathcal{X}\to\mathcal{X}$ given by
\[\mathcal{A}
\begin{pmatrix}
	T_1            \\
    \begin{pmatrix}
    	0   \\
    	i_1
    \end{pmatrix}\\
	\vdots         \\
	T_n            \\
    \begin{pmatrix}
    	0   \\
    	i_n
    \end{pmatrix}   \\
	V               \\
    A
\end{pmatrix}=
\begin{pmatrix}
	-d_1T_1                   \\
	\begin{pmatrix}
		i_1(0)  \\
	-i_1'-\delta_1(\theta)i_1
    \end{pmatrix}\\
	\vdots    \\
	-d_nT_n                   \\
    \begin{pmatrix}
    	i_n(0)                    \\
    	-i_n'-\delta_n(\theta)i_n
    \end{pmatrix}\\
    -cV    \\
	-bA
\end{pmatrix},
\]
with $\doma=\big(\prod_{i=1}^n\mathcal{P}\big)\times\mathbb{R}\times\mathbb{R}$. Then
\[\overline{\doma}=\left(\prod_{i=1}^n\mathcal{N}\right)\times\mathbb{R}\times\mathbb{R}=:\mathcal{X}_0\]
and we can consider the non-linear map $\mathcal{F}:\mathcal{X}_0\to\mathcal{X}$ given by
\[\mathcal{F}
\begin{pmatrix}
	T_1               \\
	\begin{pmatrix}
		0   \\
		i_1
	\end{pmatrix}\\
    \vdots         \\
    T_n            \\
    \begin{pmatrix}
    	0   \\
    	i_n
    \end{pmatrix}   \\
    V               \\
    A
\end{pmatrix}=
\begin{pmatrix}
	\lambda_1-h_1(T_1(t),V(t)) \\
	\begin{pmatrix}
		h_1(T_1(t),V(t)) \\
		0_{L_1}
	\end{pmatrix}\\
    \vdots    \\
    \lambda_n-h_n(T_n(t),V(t)) \\
    \begin{pmatrix}
    	h_n(T_n(t),V(t)) \\
    	0_{L_1}
    \end{pmatrix}\\
    \sum_{j=1}^n \ith{_0^\infty P_j(\theta)i_j(\theta,t)}-qA(t)V(t)  \\
    \frac{kA(t)V(t)}{h+A(t)}
\end{pmatrix}.
\]
Let
\[u(t)=\left(T_1(t),
\begin{pmatrix}
	0            \\
	i_1(\cdot,t)
\end{pmatrix},\ldots,T_n(t),
\begin{pmatrix}
	0            \\
	i_n(\cdot,t)
\end{pmatrix},V(t),A(t)
\right)^T,
\]
\[\mathcal{P}_+=\mathbb{R}_+\times\{0\}\times L^\infty\big((0,\infty),\mathbb{R}\big),\quad
\mathcal{X}_{0+}=\left(\prod_{i=1}^n\mathcal{P}_+\right)\times\mathbb{R}_+\times\mathbb{R}_+.
\]
Thus, we can rewrite system \eqref{msys} with the boundary and initial conditions as the following abstract Cauchy problem:
\begin{equation}\label{cauchy}
\der{u(t)}=\mathcal{A}u(t)+\mathcal{F}\big(u(t)\big)\quad
\text{for }t\ge0,
\text{ with }u(0)=x\in\mathcal{X}_{0+}.
\end{equation}
By applying the results given in \cite{hale2010asymptotic} and \cite{magal2004eventual}, we can conclude the following theorem.
\begin{teo}
    System \eqref{msys} generates a unique continuous semiflow $\{U(t)\}_{t\ge0}$ on $\mathcal{X}_{0+}$ that is asymptotically smooth and bounded dissipative. Furthermore, the semiflow $\{U(t)\}_{t\ge0}$ has a global compact attractor in $\mathcal{X}_{0+}$, which attracts the bounded sets of $\mathcal{X}_{0+}$.
\end{teo}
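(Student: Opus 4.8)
The plan is to cast the abstract problem \eqref{cauchy} in the framework of integrated semigroups for non-densely defined operators (Magal--Thieme, Magal--Ruan) and then invoke the classical dissipative-systems machinery to extract the attractor. First I would verify that $\mathcal{A}$ is a Hille--Yosida operator: for $\lambda$ large the resolvent equation $(\lambda-\mathcal{A})x=y$ is solved componentwise — the scalar slots $T_j,V,A$ trivially, and each age slot by integrating the transport equation $\lambda i_j+i_j'+\delta_j(\theta)i_j=g_j$ along the characteristics with the prescribed value $i_j(0)$ taken from the $\mathbb{R}$-component of $y$. This gives an explicit formula for $(\lambda-\mathcal{A})^{-1}$ and the estimates $\|(\lambda-\mathcal{A})^{-k}\|\le M/(\lambda-\omega)^k$ with, e.g., $\omega=-\min\{d_1,\dots,d_n,c,b,\dm\}$, so $\mathcal{A}$ generates an integrated semigroup; note that $\overline{\doma}=\mathcal{X}_0\subsetneq\mathcal{X}$, which is precisely why the non-dense theory is required. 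Next, using (H1) together with $h_j\in C^2$, the map $\mathcal{F}$ is continuous and Lipschitz on bounded subsets of $\mathcal{X}_{0+}$ (the terms $qAV$ and $kAV/(h+A)$ are smooth for $A\ge0$, and $\sum_j\ith{_0^\infty P_j(\theta)i_j(\theta,t)}$ is bounded-linear in the age components). Hence, for each $x\in\mathcal{X}_{0+}$, \eqref{cauchy} has a unique mild solution — defined for all $t\ge0$ thanks to the a priori bounds established above — depending continuously on $x$, and $\mathcal{X}_{0+}$ is forward invariant; this defines the continuous semiflow $\{U(t)\}_{t\ge0}$. Bounded dissipativity is then immediate: by the boundedness theorem proved above, every bounded set is eventually absorbed into the bounded positively invariant set $\Omega$.

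The substantive step is asymptotic smoothness, which I would obtain from the standard splitting $U(t)=\Theta(t)+\Phi(t)$. Here $\Theta(t)$ keeps only the contribution of the initial age profiles $i_j^0$, i.e.\ the branch $\theta\ge t$ in \eqref{ij}, where $i_j(\theta,t)=i_j^0(\theta-t)\,\sigma_j(\theta)/\sigma_j(\theta-t)$; since $\sigma_j(\theta)/\sigma_j(\theta-t)=\exp(-\itau{_{\theta-t}^{\theta}\delta_j(\tau)})\le e^{-\dm t}$, one gets $\|\Theta(t)x\|\le e^{-\dm t}\|x\|$, so $\Theta(t)\to0$ uniformly on bounded sets. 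For $\Phi(t)$ — the scalar slots together with the age branch $\theta<t$, where $i_j(\theta,t)=h_j\big(T_j(t-\theta),V(t-\theta)\big)\sigma_j(\theta)$ — I would show that, for each fixed $t>0$, $\Phi(t)$ carries a bounded set into a relatively compact one. The scalar parts lie in a bounded subset of $\mathbb{R}^{n+2}$, hence precompact; for the $L^1$ age parts I would apply the Fr\'echet--Kolmogorov criterion — the profiles vanish for $\theta>t$ (uniform tail control), and $L^1$-equicontinuity of the shifts $\theta\mapsto\theta+\rho$ follows because $\sigma_j$ is Lipschitz and $t\mapsto h_j(T_j(t),V(t))$ is uniformly continuous on the absorbing set (the right-hand sides of \eqref{msys} are uniformly bounded there, so $T_j,V$ are uniformly Lipschitz in $t$). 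This is the step I expect to demand the most care: making the translation-equicontinuity estimate uniform in the initial data and controlling the corner $\theta\approx t$.

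Finally, having shown that $\{U(t)\}_{t\ge0}$ is continuous, asymptotically smooth and bounded dissipative on $\mathcal{X}_{0+}$, the existence of a global compact attractor that attracts all bounded sets is a consequence of the classical theory of dissipative semiflows as in \cite{hale2010asymptotic} (used together with \cite{magal2004eventual} for the age-structured setting), which is exactly the assertion of the theorem. I would close by noting that the same results, combined with the bounds defining $\Omega$, place the attractor inside $\Omega$.
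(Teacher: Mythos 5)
Your proposal is correct and follows exactly the route the paper itself relies on: the paper offers no detailed argument, merely invoking the integrated semigroup framework for the non-densely defined operator $\mathcal{A}$ together with the dissipative-systems results of \cite{hale2010asymptotic} and \cite{magal2004eventual}, which is precisely the machinery you verify (Hille--Yosida property, local Lipschitz $\mathcal{F}$, the absorbing set $\Omega$, and the standard splitting of $U(t)$ into an exponentially decaying part and a compact part via Fr\'echet--Kolmogorov). Your sketch in fact supplies more detail than the paper does, and the steps you flag as delicate (uniform translation equicontinuity and the corner $\theta\approx t$) go through in the standard way.
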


\subsection{Uniform persistence}

We will now establish the uniform persistence of system \eqref{msys}, using arguments that are highly motivated by those in \cite{yang2015global} and \cite{duan2017global}.

Define
\[\mathcal{M}_0=
\left\{\big(T_1,(0,i_1),\ldots,T_n,(0,i_n),V,A\big)\in\mathcal{X}_{0+}\mid
V+\sum_{j=1}^n\ith{_0^\infty i_j(\theta)}>0\right\},\]
and $\partial\mathcal{M}_0=\mathcal{X}_{0+}\setminus\mathcal{M}_0$. For $j=1,\ldots,n$, let $\vartheta_j=\sup\{\theta\in(0,\infty)\mid P_j(\theta)>0\}$. Note that $\vartheta_j$ can possibly be $+\infty$.

\begin{lem}
    $\partial\mathcal{M}_0$ is positively invariant under the semiflow $\{U(t)\}_{t\ge0}$ generated by system \eqref{msys}. Moreover, the infection-free equilibrium $E^0=\big(\tfrac{\lambda_1}{d_1},\ 0,\ \tfrac{\lambda_2}{d_2},\ 0,\ \ldots,\ \tfrac{\lambda_n}{d_n},\ 0,\ 0,\ 0\big)$ is globally asymptotically stable restricted to $\partial\mathcal{M}_0$.
\end{lem}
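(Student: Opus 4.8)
The plan is to first make $\partial\mathcal{M}_0$ completely explicit. A state $\big(T_1,(0,i_1),\ldots,T_n,(0,i_n),V,A\big)\in\mathcal{X}_{0+}$ belongs to $\partial\mathcal{M}_0$ exactly when $V=0$ and $\sum_{j}\ith{_0^\infty i_j(\theta)}=0$; since each $i_j\ge0$, the latter forces $i_j=0$ almost everywhere for every $j$. Hence $\partial\mathcal{M}_0$ consists precisely of the states of the form $\big(T_1,0,\ldots,T_n,0,0,A\big)$ with $T_j,A\ge0$, so only the coordinates $T_1,\ldots,T_n,A$ are "live" on this set.

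For the positive invariance I would exhibit the trajectory starting from such a state in closed form and then invoke uniqueness. Given $u(0)=\big(T_1^0,0,\ldots,T_n^0,0,0,A^0\big)$, set
\[T_j(t)=\frac{\lambda_j}{d_j}+\Big(T_j^0-\frac{\lambda_j}{d_j}\Big)e^{-d_jt},\qquad i_j(\theta,t)\equiv0,\qquad V(t)\equiv0,\qquad A(t)=A^0e^{-bt}.\]
Using hypothesis (H1) in the form $h_j(T_j,0)=0$, one checks directly that this tuple solves \eqref{msys} with the initial condition \eqref{ic} and the boundary condition \eqref{bc}: the $T_j$-equation reduces to $\der{T_j}=\lambda_j-d_jT_j$, the transport equation and its boundary value $i_j(0,t)=h_j(T_j(t),0)=0$ hold trivially, the $V$-equation becomes $\der{V}=-cV$ with $V(0)=0$, and the $A$-equation becomes $\der{A}=-bA$. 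By the uniqueness part of the well-posedness theorem of Section~\ref{sec:semigr}, this tuple is the orbit $U(t)u(0)$; since it stays in $\partial\mathcal{M}_0$ (indeed in $\Omega$) for every $t\ge0$, the set $\partial\mathcal{M}_0$ is positively invariant.

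For the global asymptotic stability of $E^0$ restricted to $\partial\mathcal{M}_0$ I would read everything off the same explicit formula. Since $d_j,b>0$, we have $T_j(t)\to\lambda_j/d_j$ and $A(t)\to0$ as $t\to\infty$, while $i_j\equiv0$ and $V\equiv0$, so $U(t)u(0)\to E^0$ for every $u(0)\in\partial\mathcal{M}_0$, which gives global attractivity. Lyapunov stability follows from the contraction $\big|T_j(t)-\tfrac{\lambda_j}{d_j}\big|\le\big|T_j^0-\tfrac{\lambda_j}{d_j}\big|$ and $A(t)\le A^0$, so that $\|U(t)u(0)-E^0\|\le\|u(0)-E^0\|$ for all $t\ge0$ in the natural product norm on $\mathcal{X}$; alternatively, one may use the Lyapunov function $L=\sum_{j=1}^n\big(T_j-\tfrac{\lambda_j}{d_j}\big)^2+A^2$ on $\partial\mathcal{M}_0$, whose derivative along the restricted flow is $-2\sum_{j=1}^n d_j\big(T_j-\tfrac{\lambda_j}{d_j}\big)^2-2bA^2\le0$, vanishing only at $E^0$.

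There is no genuine obstacle in this lemma: the two points needing a little care are (i) justifying that the candidate tuple really is the unique solution, which is supplied verbatim by the semiflow theorem of Section~\ref{sec:semigr}, and (ii) keeping the attractivity claim and the stability claim separate, both of which become immediate once the orbit on $\partial\mathcal{M}_0$ is written down explicitly.
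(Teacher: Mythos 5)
Your proof is correct, but it takes a genuinely different route from the paper. You first identify $\partial\mathcal{M}_0$ explicitly as the set of states with $V=0$ and $i_j=0$ a.e.\ (this uses nonnegativity of the $i_j$-components of the state space, which is clearly the intended reading of $\mathcal{X}_{0+}$ and is used implicitly by the paper itself), then write the orbit through such a point in closed form ($T_j(t)=\tfrac{\lambda_j}{d_j}+(T_j^0-\tfrac{\lambda_j}{d_j})e^{-d_jt}$, $i_j\equiv0$, $V\equiv0$, $A(t)=A^0e^{-bt}$, using $h_j(T_j,0)=0$ from \textbf{(H1)}), and invoke uniqueness of the semiflow. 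The paper instead argues by comparison: it dominates $(i_j,V)$ by the solution $(\tilde i_j,\tilde V)$ of the auxiliary linear system \eqref{stild} with $T_j$ frozen at $\lambda_j/d_j$, shows the forcing terms $\tilde F_j$ vanish using the support bound $\vartheta_j$ of $P_j$ together with $V^0+\sum_j\int_0^\infty i_j^0\,\mathrm{d}\theta=0$, concludes $\tilde V\equiv0$, and then gets decay of $i_j$ in $L^1$ and of $A$. Your argument is more elementary and, unlike the paper's, separately establishes Lyapunov stability of $E^0$ within $\partial\mathcal{M}_0$ (via the contraction estimate or the quadratic Lyapunov function), whereas the paper only proves attractivity; the paper's comparison template, on the other hand, is the more robust machinery (it is the same device reused in the persistence analysis and would survive a larger boundary set on which the explicit-solution shortcut is unavailable). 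No gap in your argument; the only point worth flagging explicitly in a final write-up is the nonnegativity step that turns the condition $V+\sum_j\int_0^\infty i_j(\theta)\,\mathrm{d}\theta=0$ into $V=0$ and $i_j=0$ a.e.
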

\begin{proof}
    Let $y\in\partial\mathcal{M}_0$ and $\big(T_1(t),(0,i_1(\cdot,t)),\ldots,T_n(t),(0,i_n(\cdot,t)),V(t),A(t)\big)=U(t)y$. Then we have
    \begin{align*}
    	\frac{\partial i_j(\theta,t)}{\partial\theta}+\frac{\partial i_j(\theta,t)}{\partial t} & = -\delta_j(\theta)i_j(\theta,t)                                          \\
    	\der{V(t)}                                                                              & = \sum_{j=1}^n \ith{_0^\infty P_j(\theta)i_j(\theta,t)} - cV(t)-qA(t)V(t) \\
    	i_j(0,t)                                                                                & =h_j(T_j(t),V(t)),\quad i_j(\theta,0)=i_j^0(\theta),                      \\
    	V(0)                                                                                    & =V^0.
    \end{align*}
    Since $T_j(t)\le\tfrac{\lambda_j}{d_j}$ for large $t$, then $i_j(\theta,t)\le\tilde{i}_j(\theta,t)$ and $V(t)\le\tilde{V}(t)$, where $(\tilde{i},\tilde{V})$ is the solution of the system
    \begin{equation}\label{stild}
    \begin{aligned}
    	\frac{\partial \tilde{i}_j(\theta,t)}{\partial\theta}+\frac{\partial \tilde{i}_j(\theta,t)}{\partial t} & = -\delta_j(\theta)\tilde{i}_j(\theta,t)                                                    \\
    	\der{\tilde{V}(t)}                                                                                      & = \sum_{j=1}^n \ith{_0^\infty P_j(\theta)\tilde{i}_j(\theta,t)} - c\tilde{V}(t)             \\
    	\tilde{i}_j(0,t)                                                                                        & =h_j\big(\tfrac{\lambda_j}{d_j},\tilde{V}(t)\big),\quad\tilde{i}_j(\theta,0) =i_j^0(\theta) \\
    	\tilde{V}(0)                                                                                            & =V^0.
    \end{aligned}
    \end{equation}
    From the first and third equation of \eqref{stild}, we have
    \begin{equation}\label{itil}
    \tilde{i}_j(\theta,t)=
    \begin{cases}
    	h_j\big(\tfrac{\lambda_j}{d_j},\tilde{V}(t-\theta)\big)\sigma_j(\theta) & \text{if }\theta<t,    \\
    	i_j^0(\theta-t)\frac{\sigma_j(\theta)}{\sigma_j(\theta-t)}              & \text{if }\theta\ge t
    \end{cases}
    \end{equation}
    and by substituting this expression in the second equation of \eqref{stild}, we obtain
    \begin{equation}\label{eqvt}
    \der{\tilde{V}(t)}=\sum_{j=1}^n \ith{_0^th_j\big(\tfrac{\lambda_j}{d_j},\tilde{V}(t-\theta)\big)P_j(\theta)\sigma_j(\theta)} +\sum_{j=1}^n\tilde{F}_j(t) - c\tilde{V}(t),
    \end{equation}
    where
    \[\tilde{F}_j(t)=\ith{_t^\infty P_j(\theta)i_j^0(\theta-t)\frac{\sigma_j(\theta)}{\sigma_j(\theta-t)}}.\]
    For each $j$, we will prove that $\tilde{F}_j(t)=0$ for all $t\ge0$. In fact, if $t\ge\vartheta_j$, then $P_j(\theta)=0$ for $\theta>t\ge\vartheta_j$, so $\tilde{F}_j(t)\le\ith{_t^\infty P_j(\theta)i_j^0(\theta-t)}=0$. If $t<\vartheta_j$, then
    \begin{align*}
    	\tilde{F}_j(t) & \le\ith{_t^\infty P_j(\theta)i_j^0(\theta-t)}                                                          \\
    	               & \le\pmx\ith{_t^{\vartheta_j} i_j^0(\theta-t)} + \ith{_{\vartheta_j}^\infty P_j(\theta)i_j^0(\theta-t)}
    \end{align*}
    Since $y\in\partial\mathcal{M}_0$, then $\ith{_t^{\vartheta_j} i_j^0(\theta-t)}\le V^0+\sum_{j=1}^n\ith{_0^\infty i_j^0(\theta)}=0$, and also $P_j(\theta)=0$ for $\theta>\vartheta_j$, so the two terms in the right-hand side of the aforementioned inequality are equal to zero. Thus, $\tilde{F}_j(t)\equiv0$. Accordingly, \eqref{eqvt} is an autonomous equation and has a unique solution $\tilde{V}(t)\equiv0$.
    
    Hence $V(t)\le\tilde{V}(t)=0$ and $\der{A(t)}\le-bA(t)$ for large $t$, which implies that $V(t)$ and $A(t)$ approach 0 as $t\to\infty$. Now, it follows from \eqref{itil} that $\tilde{i}_j(\theta,t)=0$ for $0\le\theta\le t$. For $t<\theta$, we have
    \[\left\Vert\tilde{i}_j(\theta,t)\right\Vert_{L^1}
    =\left\Vert i_j^0(\theta-t)\frac{\sigma_j(\theta)}{\sigma_j(\theta-t)}\right\Vert_{L^1}
    \le e^{-\dm t}\left\Vert i_j^0\right\Vert_{L^1}.\]
    Therefore, $i_j(\theta,t)\to0$ as $t\to0$. Since $i_j(\theta,t)\le\tilde{i}_j(\theta,t)$, then $U(t)y$ approaches $E^0$ as $t\to\infty$ and the proof is complete.
\end{proof}\medskip

\begin{teo}
    If $R_0>1$, then the semiflow $\{U(t)\}_{t\ge0}$ is uniformly persistent with respect to the pair $(\partial\mathcal{M}_0,\mathcal{M}_0)$, i.e., there exists $\epsilon>0$ such that for each $y\in\mathcal{M}_0$,
    \[\liminf_{t\to\infty}d\big(U(t)y,\,\partial\mathcal{M}_0\big)\ge\epsilon.\]
    Furthermore, there exists a compact subset $\mathcal{A}_0\subset\mathcal{M}_0$ that is a global attractor for $\{U(t)\}_{t\ge0}$ in $\mathcal{M}_0$.
\end{teo}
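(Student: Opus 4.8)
The plan is to invoke the classical theory of uniform persistence (in the spirit of Hale--Waltman and Smith--Thieme), since its two structural hypotheses are already available: the semiflow $\{U(t)\}_{t\ge0}$ is asymptotically smooth with a global compact attractor (established above), and by the preceding lemma $\partial\mathcal{M}_0$ is closed and positively invariant with $\{E^0\}$ as its global attractor, hence $\{E^0\}$ is isolated and acyclic in $\partial\mathcal{M}_0$. What remains — the heart of the proof — is to show that $E^0$ is a \emph{uniform weak repeller} for $\mathcal{M}_0$: there is an $\epsilon>0$ with $\limsup_{t\to\infty}\|U(t)y-E^0\|\ge\epsilon$ for every $y\in\mathcal{M}_0$. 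This forces $W^s(E^0)\cap\mathcal{M}_0=\varnothing$, and then the cited persistence theorems yield uniform persistence; combining it with asymptotic smoothness and the global attractor above, the standard results for persistent semiflows (Magal--Zhao type) produce the compact global attractor $\mathcal{A}_0\subset\mathcal{M}_0$.

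To choose $\epsilon$, note that differentiating $h_j=V\hj$ at $V=0$ gives $\hj(\tfrac{\lambda_j}{d_j},0)=\dhdv$, so by continuity of $\hj\in C^2$ we have $\tfrac1c\sum_{j}N_j\,\hj(\tfrac{\lambda_j}{d_j}-\epsilon,\epsilon)\to R_0>1$ as $\epsilon\to0^+$. Fix $\epsilon>0$ small enough that $\tfrac{\lambda_j}{d_j}-\epsilon>0$ for all $j$ and $\sum_j N_j\alpha_j>c+q\epsilon$, where $\alpha_j:=\hj(\tfrac{\lambda_j}{d_j}-\epsilon,\epsilon)>0$ (positivity from (H1)--(H2)). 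Suppose, for contradiction, that some $y\in\mathcal{M}_0$ has $\limsup_{t\to\infty}\|U(t)y-E^0\|<\epsilon$. One first checks that $V(t)>0$ for all large $t$ (combining the positivity of Theorem 2.1 with $V+\sum_j\|i_j(\cdot,t)\|_{L^1}>0$ along the orbit), so after a time shift we may assume $V(0)>0$ and, for all $t\ge0$, $T_j(t)>\tfrac{\lambda_j}{d_j}-\epsilon$, $V(t)<\epsilon$ and $A(t)<\epsilon$. Since $i_j(\theta,t)=h_j(T_j(t-\theta),V(t-\theta))\sigma_j(\theta)$ for $\theta<t$, monotonicity (H2)--(H3) gives $h_j(T_j(t-\theta),V(t-\theta))=V(t-\theta)\hj(T_j(t-\theta),V(t-\theta))\ge\alpha_j V(t-\theta)$; dropping the nonnegative $\theta\ge t$ part of $i_j$ and using $-qAV\ge-q\epsilon V$, the $V$-equation of \eqref{msys} yields, for all $t\ge0$,
\[
\der{V(t)}\ \ge\ \sum_{j=1}^n\alpha_j\int_0^t P_j(\theta)\sigma_j(\theta)V(t-\theta)\,\mathrm{d}\theta\ -\ (c+q\epsilon)V(t).
\]

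The final step exploits $R_0>1$ through a Laplace-transform comparison. The function $\chi(s):=(c+q\epsilon)+s-\sum_j\alpha_j\int_0^\infty e^{-s\theta}P_j(\theta)\sigma_j(\theta)\,\mathrm{d}\theta$ satisfies $\chi(0)=(c+q\epsilon)-\sum_j\alpha_jN_j<0$ and $\chi(s)\to+\infty$ as $s\to+\infty$, so it has a root $s^*>0$. Because $V$ is bounded (Theorem 2.1), $\widehat V(s^*):=\int_0^\infty e^{-s^*t}V(t)\,\mathrm{d}t<\infty$ and $\dot V$ is bounded; multiplying the displayed inequality by $e^{-s^*t}$, integrating over $[0,\infty)$, integrating by parts on the left (the boundary term at $\infty$ vanishes), and applying Tonelli's theorem to the convolution on the right together with $\sum_j\alpha_j\int_0^\infty e^{-s^*\theta}P_j(\theta)\sigma_j(\theta)\,\mathrm{d}\theta=c+q\epsilon+s^*$, every term involving $\widehat V(s^*)$ cancels and one is left with $-V(0)\ge0$, contradicting $V(0)>0$. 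Hence $E^0$ is a uniform weak repeller and the theorem follows. I expect the main obstacle to be the bookkeeping around this contradiction: verifying that orbits from $\mathcal{M}_0$ have $V(t)>0$ eventually (legitimizing the time shift and the strict inequality $V(0)>0$), carefully reducing ``$\limsup\|U(t)y-E^0\|<\epsilon$'' to the clean differential inequality while controlling the ``$\theta\ge t$'' tail of $i_j$, and justifying the Laplace-transform manipulations (finiteness of all transforms, Tonelli, decay of the boundary term) — all of which rely only on the boundedness and positivity already proved.
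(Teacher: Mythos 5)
Your proposal is correct in substance and shares the paper's overall scaffolding — asymptotic smoothness plus the global compact attractor, the boundary lemma giving that $E^0$ attracts $\partial\mathcal{M}_0$ (hence isolation/acyclicity there), the Hale--Waltman persistence theorem, the positivity of $V(t)$ along orbits in $\mathcal{M}_0$ (which the paper obtains by citing a Demasse-type lemma, exactly the technical point you flag), and a Magal--Zhao-type result for the attractor $\mathcal{A}_0$ — but the heart of the argument is carried out by a genuinely different device. The paper shows $W^s(\{E^0\})\cap\mathcal{M}_0=\emptyset$ by introducing $\gamma_j(x)=\int_x^\infty P_j(\theta)e^{-\int_x^\theta\delta_j(\tau)\,\mathrm{d}\tau}\,\mathrm{d}\theta$ and the functional $\Phi(t)=\sum_j\int_0^\infty\gamma_j(\theta)i_j(\theta,t)\,\mathrm{d}\theta+V(t)$, whose derivative along solutions is $cV\big(\sum_j N_jh_j(T_j,V)/(cV)-qA/c-1\big)$; if an orbit in $\mathcal{M}_0$ converged to $E^0$, then $R_0>1$ makes $\Phi$ eventually nondecreasing, so $\Phi(t)\ge\Phi(t_1)>0$, contradicting $(i_j,V)\to(0,0_{L^1})$. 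You instead prove the stronger uniform weak repeller property with an explicit $\epsilon$: using the representation \eqref{ij}, monotonicity of $\overline{h}_j$ from (H2)--(H3), and the bound $A<\epsilon$, you derive the renewal-type inequality $\frac{\mathrm{d}V}{\mathrm{d}t}\ge\sum_j\alpha_j\int_0^tP_j(\theta)\sigma_j(\theta)V(t-\theta)\,\mathrm{d}\theta-(c+q\epsilon)V$, and then the Laplace transform evaluated at the positive root $s^*$ of your characteristic function $\chi$ (which exists precisely because $R_0>1$ forces $\sum_j N_j\alpha_j>c+q\epsilon$ for small $\epsilon$) yields $-V(0)\ge0$, a contradiction. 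Each route has its merits: yours gives a quantitative $\epsilon$-repulsion directly from the linearized/characteristic structure and sidesteps the bookkeeping of $\Phi$ and its derivative, at the cost of the transform justifications (finiteness, Tonelli, vanishing boundary term), which you correctly note follow from the boundedness in Theorem 2.1; the paper's monotone-functional argument is shorter and stays entirely at the level of the PDE system, though it only establishes $W^s(\{E^0\})\cap\mathcal{M}_0=\emptyset$, which is all the cited persistence theorem requires. Both arguments rest on the same two unavoidable ingredients — eventual strict positivity of $V$ on orbits from $\mathcal{M}_0$ and the integral representation \eqref{ij} — so your proof is a legitimate alternative to the published one.
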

\begin{proof}
    Since $E^0$ is globally asymptotically stable in $\partial\mathcal{M}_0$, by Theorem 4.2 in \cite{hale1989persistence} we only need to investigate the behaviour of the solutions starting in $\mathcal{M}_0$ in some neighbourhood of $E^0$. Then, we will show that $W^s\big(\{E^0\}\big)\cap\mathcal{M}_0=\emptyset$, where
    \[W^s\big(\{E^0\}\big)=\left\{y\in\mathcal{X}_{0+}\mid\lim_{t\to\infty}U(t)y=E^0\right\}.\]
    
    Assume by contradiction that there exists $y\in W^s\big(\{E^0\}\big)\cap\mathcal{M}_0$. It follows that there exists $t_0>0$ such that $V(t_0)+\sum_{j=1}^n\ith{_0^\infty i_j(\theta,t_0)}>0$. Using the same argument as in the proof of Lemma 3.6(i) in \cite{demasse2013age}, we have that $V(t)>0$ for $t\ge0$ and $i_j(\theta,t)>0$ for any $\theta,t\ge0$. Define the function
    \[\gamma_j(x)=\ith{_x^\infty P_j(\theta)e^{-\itau{_x^\theta\delta_j(\tau)}}}.\]
    Note that $\gamma_j$ is bounded and satisfies $\gamma_j'(x)=\delta_j(x)\gamma_j(x)-P_j(x)$ for all $x\ge0$. Consider the function
    \[\Phi(t)=\ith{_0^\infty\gamma_j(\theta)i_j(\theta,t)}+V(t),\]
    which satisfies
    \[\der{\Phi(t)}=cV(t)
    \left(\sum_{j=1}^n\frac{N_jh_j(T_j(t),V(t))}{cV(t)}-\frac{qA(t)}{c}-1\right).\]
    Since $y\in W^s\big(\{E^0\}\big)$, we have that $T_j(t)\to\tfrac{\lambda_j}{d_j}$, $V(t)\to0$, and $A(t)\to0$ as $t\to\infty$. Since $R_0>1$, then
    \[\lim_{t\to\infty}\left(\sum_{j=1}^n\frac{N_jh_j(T_j(t),V(t))}{cV(t)}-\frac{qA(t)}{c}-1\right)
    =\sum_{j=1}^n\frac{N_j}{c}\dhdv-1\ge0,\]
    so $\Phi(t)$ is non-decreasing for sufficiently large $t$. Thus, there exists $t_1>0$ such that $\Phi(t)\ge\Phi(t_1)$ for all $t>t_1$. Since $\Phi(t_1)>0$, this prevents the function $\big(V(t),i_j(\cdot,t)\big)$ from converging to $(0,0_{L^1})$ as $t\to\infty$, which contradicts that $T_j(t)\to\tfrac{\lambda_j}{d_j}$. This completes the proof.
\end{proof}

\section{Steady states and local stability}\label{sec:stead}

In this section, we will study the existence and local properties of the steady states for system \eqref{msys}. In order to prove the existence of infected equilibria, we first need to prove the following results.
\begin{lem}\label{lemt}
    For $j=1,\ldots,n$, $0\le T_j\le\tfrac{\lambda_j}{d_j}$ and $V\ge0$, the equation
    \begin{equation}\label{eqTV}
    \lambda_j-d_jT_j-h_j(T_j,V)=0
    \end{equation}
    has a unique solution given by $T_j=f_j(V)$, where $f_j:\mathbb{R}_+\to\big(0,\tfrac{\lambda_j}{d_j}\big]$ is a decreasing function with $f_j(0)=\tfrac{\lambda_j}{d_j}$.
\end{lem}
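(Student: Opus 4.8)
The plan is to fix $V\ge0$ and analyse \eqref{eqTV} as a scalar equation $g_V(T_j)=0$ in the single variable $T_j$, where $g_V(T_j):=\lambda_j-d_jT_j-h_j(T_j,V)$ is defined on $\big[0,\tfrac{\lambda_j}{d_j}\big]$. First I would check the endpoints: by (H1) we have $h_j(0,V)=0$, so $g_V(0)=\lambda_j>0$, while $g_V\big(\tfrac{\lambda_j}{d_j}\big)=-h_j\big(\tfrac{\lambda_j}{d_j},V\big)\le0$, with strict inequality when $V>0$ (since then $h_j\big(\tfrac{\lambda_j}{d_j},V\big)>0$ by (H2) and the non-negativity of $h_j$). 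Because $h_j\in C^2$, $g_V$ is continuous, so the intermediate value theorem produces a zero. For uniqueness I would show $g_V$ is strictly decreasing on $\big[0,\tfrac{\lambda_j}{d_j}\big]$: when $V=0$ this is clear because $g_0(T_j)=\lambda_j-d_jT_j$; when $V>0$, (H2) says $T_j\mapsto h_j(T_j,V)$ is strictly increasing on $(0,\infty)$ and (H1) gives $h_j(0,V)=0<h_j(T_j,V)$ for $T_j>0$, so $T_j\mapsto h_j(T_j,V)$ is strictly increasing on the whole interval, hence so is $-g_V$. Thus the zero is unique; denote it $f_j(V)$. Since $g_V(0)>0\ge g_V\big(\tfrac{\lambda_j}{d_j}\big)$, it lies in $\big(0,\tfrac{\lambda_j}{d_j}\big]$, and at $V=0$ it is the root of $\lambda_j-d_jT_j=0$, namely $f_j(0)=\tfrac{\lambda_j}{d_j}$.

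It remains to prove $f_j$ is decreasing. I would take $0\le V_1<V_2$ and set $T_i=f_j(V_i)\in\big(0,\tfrac{\lambda_j}{d_j}\big]$, so that $\lambda_j-d_jT_i=h_j(T_i,V_i)$ for $i=1,2$, and argue by contradiction assuming $T_1\le T_2$. If $T_1=T_2=:T$, then $T>0$ and $h_j(T,V_1)=h_j(T,V_2)$, contradicting that $h_j(T,\cdot)$ is strictly increasing (use (H2) for $V_1>0$, and (H1) together with $h_j(T,V_2)>0$ for $V_1=0$). If $T_1<T_2$, then $\lambda_j-d_jT_1>\lambda_j-d_jT_2$ forces $h_j(T_1,V_1)>h_j(T_2,V_2)$, while monotonicity in each variable gives $h_j(T_1,V_1)\le h_j(T_2,V_1)\le h_j(T_2,V_2)$ (using $0<T_1<T_2$, $V_1<V_2$, and (H1) on the boundary when $V_1=0$), a contradiction. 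Hence $T_1>T_2$, i.e. $f_j$ is strictly decreasing on $\mathbb{R}_+$.

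The only subtlety I anticipate is the bookkeeping at the boundary of $\mathbb{R}^2_+$, since (H2) guarantees strict monotonicity of $h_j$ only on the open quadrant; this is absorbed by (H1), i.e. $h_j(0,V)=h_j(T_j,0)=0$, together with the fact that $f_j(V)>0$ always. If later arguments need regularity of $f_j$, one can alternatively invoke the implicit function theorem applied to $\Phi(T_j,V):=\lambda_j-d_jT_j-h_j(T_j,V)$, whose partial $\partial\Phi/\partial T_j=-d_j-\partial h_j/\partial T_j\le-d_j<0$ never vanishes; this yields $f_j\in C^2$ with $f_j'(V)=-\big(\partial h_j/\partial V\big)\big/\big(d_j+\partial h_j/\partial T_j\big)\le0$, again using that $h_j$ is non-decreasing in each variable on $\mathbb{R}^2_+$ by (H1)–(H2) and continuity.
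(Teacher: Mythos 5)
Your proposal is correct, and its existence--uniqueness part coincides with the paper's: fix $V$, note $\varphi(0,V)=\lambda_j>0$ and $\varphi(\lambda_j/d_j,V)\le 0$, and use continuity plus strict monotonicity in $T_j$ (which, as you observe, already follows from the $-d_jT_j$ term once $h_j$ is non-decreasing in $T_j$). Where you genuinely diverge is the monotonicity of $f_j$: the paper differentiates the identity $\lambda_j-d_jf_j(V)-h_j(f_j(V),V)=0$ implicitly and reads off
\[
f_j'(V)=-\frac{\partial h_j/\partial V}{\,d_j+\partial h_j/\partial T_j\,},
\]
concluding negativity from \textbf{(H2)}, whereas you give a derivative-free comparison argument: assuming $f_j(V_1)\le f_j(V_2)$ for $V_1<V_2$ and playing the two equilibrium identities against the monotonicity of $h_j$ in each variable (with \textbf{(H1)} covering the boundary cases $V_1=0$ or $T_j=0$). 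Your route is slightly longer but arguably more careful: strict monotonicity of $h_j$ in $V$ only forces $\partial h_j/\partial V\ge 0$ pointwise, so the derivative formula by itself gives $f_j'\le 0$ rather than strict negativity, while your comparison argument delivers strict decrease of $f_j$ directly --- which is exactly what the paper later uses in Lemma 4.2 when it writes $f_j(V_2)<f_j(V_1)$. The paper's route, in exchange, is shorter and produces the explicit formula for $f_j'$ (and the regularity of $f_j$), which you recover anyway in your closing remark via the implicit function theorem, noting correctly that $\partial\varphi/\partial T_j\le -d_j<0$ never vanishes. No gaps; both arguments are sound.
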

\begin{proof}
    Let $\varphi(T_j,V)=\lambda_j-d_jT_j-h_j(T_j,V)$. If $V=0$, it is clear that the only solution of \eqref{eqTV} is $T_j=\tfrac{\lambda_j}{d_j}$. If $V>0$, then assumption \textbf{(H2)} implies that $\varphi(T_j,V)$ is strictly decreasing with respect to $T_j$. Since $\varphi(0,V)=\lambda_j>0$ and $\varphi(\tfrac{\lambda_j}{d_j},V)<\lambda_j-d_j(\tfrac{\lambda_j}{d_j})=0$, then $\varphi(T_j,V)=0$ has a solution $T_j=f_j(V)$ in the interval $(0,\tfrac{\lambda_j}{d_j})$, and it is unique due to monotonicity of $\varphi$.
    
    Thus, the function $f_j$ is well defined and positive. Moreover, differentiation of \eqref{eqTV} gives $-\left(d_j+\frac{\partial h_j}{\partial T_j}(T_j,V)\right)\;\text{d}T_j - \frac{\partial h_j}{\partial V}(T_j,V)\;\text{d}V=0$, so
    \[f_j'(V)=\frac{\text{d}T_j}{\text{d}V}=-\left(\frac{\partial h_j}{\partial V}(T_j,V)\right)/\left(d_j+\frac{\partial h_j}{\partial T_j}(T_j,V)\right),\]
    which is negative for $V>0$ by assumption \textbf{(H2)}, Therefore, $f_j$ is decreasing.
\end{proof}\medskip

\begin{lem}\label{lemhj}
    The function $\psi_j:\mathbb{R}_+\to\mathbb{R}_+$ defined by $\psi_j(V)=\hj(f_j(V),V)$ is strictly decreasing for $V\ge0$, and $\psi_j(0)=\dhdv$.
\end{lem}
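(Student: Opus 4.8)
The plan is to treat the two assertions separately: the value $\psi_j(0)$ is a direct computation from (H1), while the monotonicity will follow by differentiating $\psi_j$ as a composition and reading off the sign from (H2), (H3) and Lemma \ref{lemt}.

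For the value at the origin, hypothesis (H1) gives $h_j(T_j,V)=V\hj(T_j,V)$; differentiating in $V$ and evaluating at $V=0$ yields $\frac{\partial h_j}{\partial V}(T_j,0)=\hj(T_j,0)$ for all $T_j\ge0$. Since $f_j(0)=\tfrac{\lambda_j}{d_j}$ by Lemma \ref{lemt}, this gives $\psi_j(0)=\hj\big(f_j(0),0\big)=\hj\big(\tfrac{\lambda_j}{d_j},0\big)=\dhdv$, which is the second claim.

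For the monotonicity I would first record that $f_j$ is continuous on $[0,\infty)$ and $C^1$ on $(0,\infty)$; the latter is exactly the differentiation carried out in the proof of Lemma \ref{lemt}, and continuity at $0$ follows from the monotonicity of $f_j$ together with the defining equation \eqref{eqTV}. Hence $\psi_j=\hj\circ(f_j,\mathrm{id})$ is continuous on $[0,\infty)$, is $C^1$ on $(0,\infty)$, and for $V>0$ the chain rule gives
\[\psi_j'(V)=\frac{\partial\hj}{\partial T_j}\big(f_j(V),V\big)\,f_j'(V)+\frac{\partial\hj}{\partial V}\big(f_j(V),V\big).\]
For $V>0$ we have $0<f_j(V)\le\tfrac{\lambda_j}{d_j}$, so differentiating the identity $h_j=V\hj$ with respect to $T_j$ gives $\frac{\partial\hj}{\partial T_j}\big(f_j(V),V\big)=\tfrac1V\frac{\partial h_j}{\partial T_j}\big(f_j(V),V\big)>0$ by (H2), while $f_j'(V)<0$ by Lemma \ref{lemt}; thus the first summand is strictly negative. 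The second summand is $\le0$ by (H3). Therefore $\psi_j'(V)<0$ for every $V>0$, and since $\psi_j$ is continuous on $[0,\infty)$ this forces $\psi_j$ to be strictly decreasing on $[0,\infty)$.

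The only point that needs a little care is that the $T_j$-derivative of $\hj$ be strictly positive, not merely non-negative, for $T_j,V>0$: this is precisely where the strict monotonicity in (H2) — in the same form already used to obtain $f_j'(V)<0$ in Lemma \ref{lemt} — is essential, transferred from $h_j$ to $\hj$ through the factorization in (H1). Everything else (the chain rule, the sign bookkeeping, and the passage from a negative derivative on $(0,\infty)$ plus continuity at $0$ to strict monotonicity on $[0,\infty)$) is routine.
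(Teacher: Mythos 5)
Your argument reaches the right conclusion, but by a genuinely different route from the paper's, and it has one delicate point. The paper never differentiates $\hj$ or $f_j$ here: for $0\le V_1<V_2$ it compares values directly, $\hj(f_j(V_2),V_2)<\hj(f_j(V_1),V_2)\le\hj(f_j(V_1),V_1)$, where the strict inequality uses $f_j(V_2)<f_j(V_1)$ (Lemma~\ref{lemt}) together with the strict monotonicity of $h_j$ in $T_j$ from \textbf{(H2)} (divide by $V_2>0$), and the second inequality is \textbf{(H3)}; the value $\psi_j(0)=\dhdv$ is computed as a difference quotient, which is equivalent to your product-rule computation, so that part of your proof matches the paper in substance. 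Your chain-rule argument instead needs $\frac{\partial h_j}{\partial T_j}\big(f_j(V),V\big)>0$, and this is the step to be careful about: \textbf{(H2)} asserts strict monotonicity of the function, which gives only $\frac{\partial h_j}{\partial T_j}\ge0$ (a strictly increasing function can have a vanishing partial derivative at points), so as written you only certify $\psi_j'(V)\le0$, i.e.\ that $\psi_j$ is non-increasing; strictness could a priori fail on an interval where both summands vanish. Admittedly the paper makes the same reading of \textbf{(H2)} in the proof of Lemma~\ref{lemt} to get $f_j'(V)<0$, so you are no worse off than the paper on that score; but for the present lemma the function-level comparison avoids derivatives of $\hj$ altogether and is airtight. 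The cleanest repair of your version is to replace the sign argument for the first summand by the paper's comparison: if $\psi_j$ were constant, say $\psi_j(V_1)=\psi_j(V_2)=K$ with $V_1<V_2$, then \textbf{(H3)} and \eqref{eqTV} give $h_j(f_j(V_1),V_2)\le KV_2=h_j(f_j(V_2),V_2)$, contradicting \textbf{(H2)} since $f_j(V_2)<f_j(V_1)$; combined with your $\psi_j'\le0$ this yields strict decrease.
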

\begin{proof}
    Let $0\le V_1<V_2$. Then $f_j(V_2)<f_j(V_1)$ by Lemma \eqref{lemt}. By \textbf{(H2)} this implies that $h_j(f_j(V_2),V_2)<h_j(f_j(V_1),V_2)$, so $\hj(f_j(V_2),V_2)<\hj(f_j(V_1),V_2)$. By \textbf{(H3)} we also have that $\hj$ is non-increasing with respect to the second variable, so $\hj(f_j(V_1),V_2)\le\hj(f_j(V_1),V_1)$. Therefore, by transitivity, $\hj(f_j(V_2),V_2)<\hj(f_j(V_1),V_1)$, i.e., $\psi_j(V_2)<\psi_j(V_1)$. For $V=0$, we have
    \[\psi_j(0)=\hj(f_j(0),0)=\lim_{V\to0}\frac{h_j(f_j(0),V)}{V}
    =\lim_{V\to0}\frac{h_j\big(\tfrac{\lambda_j}{d_j},V\big)-h_j\big(\tfrac{\lambda_j}{d_j},0\big)}{V-0}=\dhdv,\]
    hence the result.
\end{proof}\medskip

We can now define the viral reproduction number $R_*$ of model \eqref{msys}, which is given by
\begin{equation}\label{rstar}
R_*=\frac1c\sum_{j=1}^nN_j\hj\big(f_j(\tfrac{bh}{k}),\,\tfrac{bh}{k}\big)
\end{equation}
and can be interpreted as the average number of antibodies that are activated by the introduction of a single virion within the host, under the condition that there is no previous antibody response. Since $\tfrac{bh}{k}>0$, Lemma \ref{lemhj} implies that $\hj\big(f_j(\tfrac{bh}{k}),\,\tfrac{bh}{k}\big)<\hj(f_j(0),0)=\dhdv$, and by using the expression in \eqref{r0} for the basic reproduction number $R_0$ we can see that $R_*<R_0$.

These two numbers, $R_0$ and $R_*$, determine whether system \eqref{msys} has one, two, or three steady states, as asserted in the following theorem.

\begin{teo}
    System \eqref{msys} always has an infection-free steady state $E^0=\big(T_1^0,\ 0,\ T_2^0,\ 0,$\ $\ldots,\ T_n^0,\ 0,\ 0,\ 0\big)$, where $T_j^0=\lambda_j/d_j$.
    
    In addition, when $R_0>1$, system \eqref{msys} has a unique immune-free infected steady state $E^*=\big(T^*_1,\ i^*_1(\theta),\ \ldots,\ T^*_n,\ i^*_n(\theta),\ V^*,\ 0\big)$, where
    \[T^*_j=f_j(V^*),\quad
    i^*_j(\theta)=h_j\big(f_j(V^*),V^*\big)\sigma_j(\theta),\]
    and $V^*$ is the unique positive solution of
    \begin{equation}\label{FV}
    \sum_{j=1}^nN_j\hj\big(f_j(V^*),V^*\big)-c=0.
    \end{equation}
    
    Also, if $R_*>1$, then system \eqref{msys} has a unique antibody-immune infected steady state $\hat{E}=\big(\t_1,\ \ih_1(\theta),\ \ldots,\ \t_n,\ \ih_n(\theta),\ \v,\ \a\big)$, where
    \[\t_j=f_j(\v),\quad
    \ih_j(\theta)=h_j\big(f_j(\tfrac{b}{k}(h+\a)),\,\tfrac{b}{k}(h+\a)\big)\sigma_j(\theta),\quad
    \v=\frac{b}{k}(h+\a),\]
    and $\a$ is the unique positive solution of
    \begin{equation}
    \sum_{j=1}^nN_j\hj\big(f_j(\tfrac{b}{k}(h+\a)),\,\tfrac{b}{k}(h+\a)\big)-c-q\a=0.
    \end{equation}
\end{teo}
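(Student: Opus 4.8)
The plan is to turn the stationary version of \eqref{msys} into a scalar equation in a single unknown and then read off the steady states from the monotonicity supplied by Lemmas \ref{lemt} and \ref{lemhj}. Setting all time derivatives to zero, the second equation becomes the ODE $i_j'(\theta)=-\delta_j(\theta)i_j(\theta)$ with $i_j(0)=h_j(T_j,V)$ by \eqref{bc}, so $i_j(\theta)=h_j(T_j,V)\sigma_j(\theta)$ and $\ith{_0^\infty P_j(\theta)i_j(\theta)}=N_jh_j(T_j,V)$. Hence a steady state is exactly a solution of
\begin{equation}\label{plan:ss}
\lambda_j-d_jT_j-h_j(T_j,V)=0,\qquad
\sum_{j=1}^n N_jh_j(T_j,V)-cV-qAV=0,\qquad
A\big(\tfrac{kV}{h+A}-b\big)=0 .
\end{equation}
The last relation forces $A=0$ or $V=\tfrac{b}{k}(h+A)$. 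If $V=0$, it reduces to $-bA=0$, so $A=0$, and then \textbf{(H1)} and the first relation give $T_j=\lambda_j/d_j$; this yields $E^0$, which exists unconditionally. It remains to locate the steady states with $V>0$, split according to $A=0$ or $A>0$.

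\textbf{The state $E^*$ ($A=0$, $V>0$).} By Lemma \ref{lemt} the first relation in \eqref{plan:ss} gives $T_j=f_j(V)$. Writing $h_j=V\hj$ as in \textbf{(H1)}, the second relation becomes $V\big(\sum_{j=1}^nN_j\psi_j(V)-c\big)=0$ with $\psi_j$ from Lemma \ref{lemhj}, hence, since $V>0$, it is equivalent to \eqref{FV}. Put $G(V)=\sum_{j=1}^nN_j\psi_j(V)$. Lemma \ref{lemhj} makes $G$ continuous and strictly decreasing with $G(0)=\sum_{j=1}^nN_j\dhdv=cR_0$, while $0\le\psi_j(V)=\hj(f_j(V),V)\le\hj(\lambda_j/d_j,V)\to0$ as $V\to\infty$ by monotonicity of $\hj$ in its first variable, $f_j(V)\le\lambda_j/d_j$, and \textbf{(H3)}; hence $G(V)\to0$. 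Thus $G(V)=c$ has a solution in $(0,\infty)$, necessarily unique by strict monotonicity, precisely when $G(0)>c$, i.e. when $R_0>1$; this produces $E^*$ with $V^*$ the unique root of \eqref{FV} and the stated $T_j^*,i_j^*(\theta)$.

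\textbf{The state $\hat E$ ($A>0$).} Now $\v=\tfrac{b}{k}(h+\a)>0$ is automatic, so again $\t_j=f_j(\v)$ by Lemma \ref{lemt}, and factoring $h_j=V\hj$ the second relation in \eqref{plan:ss} becomes $\Psi(\a):=\sum_{j=1}^nN_j\psi_j\big(\tfrac{b}{k}(h+\a)\big)-c-q\a=0$. Since $A\mapsto\tfrac{b}{k}(h+A)$ is increasing and every $\psi_j$ is strictly decreasing, $\Psi$ is continuous and strictly decreasing on $[0,\infty)$, with $\Psi(0)=\sum_{j=1}^nN_j\psi_j(\tfrac{bh}{k})-c=c(R_*-1)$ by \eqref{rstar} and $\Psi(A)\to-\infty$ as $A\to\infty$. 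Hence $\Psi$ has a zero in $(0,\infty)$, unique by strict monotonicity, exactly when $\Psi(0)>0$, i.e. when $R_*>1$; this yields $\hat E$ with $\v=\tfrac{b}{k}(h+\a)$, $\t_j=f_j(\v)$ and $\ih_j(\theta)=h_j(\t_j,\v)\sigma_j(\theta)$, which matches the stated form after substituting $\v=\tfrac{b}{k}(h+\a)$.

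\textbf{Main obstacle.} Everything is routine once this monotone reduction is in place; the delicate point is the limiting behaviour $\psi_j(V)\to0$ as $V\to\infty$, needed to force $G$ and $\Psi$ below the relevant levels and thereby guarantee that the intermediate value argument applies. I handle it by the squeeze $0\le\psi_j(V)=\hj(f_j(V),V)\le\hj(\lambda_j/d_j,V)$, using that $\hj$ is increasing in its first argument (a consequence of \textbf{(H2)} via $h_j=V\hj$) and $f_j(V)\le\lambda_j/d_j$, and then invoking \textbf{(H3)}. A secondary bookkeeping point is to carry the factorisation $h_j=V\hj$ carefully so that the spurious root $V=0$ of the viral equation is correctly discarded in both infected cases.
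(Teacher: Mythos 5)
Your proposal is correct and takes essentially the same route as the paper: solve the stationary transport equation to get $i_j(\theta)=h_j(T_j,V)\sigma_j(\theta)$, use Lemma \ref{lemt} to write $T_j=f_j(V)$, and reduce existence and uniqueness of $E^*$ and $\hat E$ to the sign at zero of a strictly decreasing scalar function of $V$ (respectively of $A$), whose monotonicity comes from Lemma \ref{lemhj}. The only difference is cosmetic: you justify $\hj\big(f_j(V),V\big)\to0$ as $V\to\infty$ by the squeeze $0\le\psi_j(V)\le\hj(\lambda_j/d_j,V)$ together with \textbf{(H3)}, where the paper invokes \textbf{(H3)} directly, which is a slightly more explicit treatment of the same limit.
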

\begin{proof}
    It is clear that for \eqref{msys} the infection-free steady state $E^0$ always exists. To obtain the immune-free infected steady state $E^*$, we have to assume that $A^*=0$ and $V^*\ne0$. We obtain the system
    \begin{equation}\label{se0}
    \begin{aligned}
    	\lambda_j-d_jT^*_j-h_j(T_j^*,V^*)                                             & =   0 \\
    	\frac{\text{d}i^*_j(\theta)}{\text{d}\theta} = -\delta_j(\theta)i^*_j(\theta) &  \\
    	\sum_{j=1}^n \ith{_0^\infty P_j(\theta)i^*_j(\theta)} - cV^*                  & = 0.
    \end{aligned}
    \end{equation}
    By Lemma \ref{lemt}, the solution to the first equation of this system is $T^*_j=f_j(V^*)$, while the second equation together with the boundary condition \eqref{bc} yields $i^*_j(\theta)=h_j(T^*_j,V^*)\sigma_j(\theta)=h_j(f_j(V^*),V^*)\sigma_j(\theta)$. Substituting in the third equation of \eqref{se0}, we get
    \[\sum_{j=1}^n\ith{_0^\infty P_j(\theta)\sigma_j(\theta)h_j(f_j(V^*),V^*)} - cV^* = 0\]
    and since $V^*\ne0$, we can divide the above equation by $V^*$ and write it as $F(V^*)=0$ with
    \[F(V)=\sum_{j=1}^nN_j\hj(f_j(V),V)-c.\]
   
    By Lemma \ref{lemhj}, the function $V\to F(V)$ is continuous and strictly decreasing for $V\ge0$, and since $\lim\limits_{V\to\infty}\hj(f_j(V),V)=0$ by \textbf{(H3)}, then 
    \[\lim_{V\to\infty}F(V)=\sum_{j=1}^nN_j\lim_{V\to\infty}\hj(f_j(V),V)-c=-c<0.\]
    This implies that $F(V)=0$ has no positive solutions if $F(0)\le0$, while it has exactly one positive solution if $F(0)>0$. Since $\hj(f_j(0),0)=\psi_j(0)=\dhdv$, then
    \[F(0)=\sum_{j=1}^nN_j\dhdv-c=\sum_{j=1}^nN_j\dhdv\left(1-\frac{1}{R_0}\right),\]
    so the steady state $E^*$ exists and is positive if and only if $R_0>1$.
    
    For the antibody-immune infected steady state $\hat{E}$, we assume $\a\ne0$ and $\v\ne0$, so we obtain the system
    \begin{equation}\label{seh}
    \begin{aligned}
    	\lambda_j-d_j\t_j-h_j(\t_j,\v)                                                & =   0 \\
    	\frac{\text{d}\ih_j(\theta)}{\text{d}\theta} = -\delta_j(\theta)\ih_j(\theta) &  \\
    	\sum_{j=1}^n \ith{_0^\infty P_j(\theta)\ih_j(\theta)} - c\v-q\a\v             & = 0   \\
    	\frac{k\v}{h+\a}-b                                                            & =0.
    \end{aligned}
    \end{equation}
    The last equation of this system implies $\v=\frac{b}{k}(h+\a)$. From the first and second equations, we have $\t_j=f_j(\v)$ and $\ih_j(\theta)=h_j(\t_j,\v)\sigma_j(\theta)=h_j\big(f_j(\tfrac{b}{k}(h+\a)),\,\tfrac{b}{k}(h+\a)\big)\sigma_j(\theta)$. Substituting in the third equation of \eqref{seh}, we get
    \[\sum_{j=1}^n \ith{_0^\infty P_j(\theta)\sigma_j(\theta)h_j\big(f_j(\tfrac{b}{k}(h+\a)),\,\tfrac{b}{k}(h+\a)\big)} - c\v -q\a\v = 0\]
    and since $\v\ne0$, this is equivalent to $G(\a)=0$, where
    \[G(A)=\sum_{j=1}^nN_j\hj\big(f_j(\tfrac{b}{k}(h+A)),\,\tfrac{b}{k}(h+A)\big) -c-qA=0.\]
    
    By Lemma \ref{lemhj} we know that $\hj(f_j(V),V)$ decreases with respect to $V$, so $A\to G(A)$ is a continuous and decreasing function for $A\ge0$. Since $V=\tfrac{b}{k}(h+A)$ tends to infinity as $A\to\infty$ and $\lim\limits_{V\to\infty}\hj(f_j(V),V)=0$, then
    \[\lim_{A\to\infty}G(A)=\sum_{j=1}^nN_j\lim_{V\to\infty}\hj(f_j(V),V)+\lim_{A\to\infty}(-c-qA)=-\infty.\]
    This implies that $G(A)=0$ has no positive solutions if $G(0)\le0$, while it has exactly one positive solution if $G(0)>0$. We have
    \[G(0)=\sum_{j=1}^nN_j\hj\big(f_j(\tfrac{bh}{k}),\,\tfrac{bh}{k}\big)-c
    =\sum_{j=1}^nN_j\hj\big(f_j(\tfrac{bh}{k}),\,\tfrac{bh}{k}\big)\left(1-\frac{1}{R_*}\right),\]
    so the steady state $\hat{E}$ exists and is positive if and only if $R_*>1$.
\end{proof}\medskip

The above theorem implies that system \eqref{msys} has only one steady state when $R_0\le1$, two steady states when $R_*\le1<R_0$ and three when $1<R_*$. We will now analyse their local stability by means of the characteristic equation of the system.

\begin{teo}
    The infection-free steady state $E^0$ of system \eqref{msys} is locally asymptotically stable if $R_0<1$ and it is unstable if $R_0>1$.
\end{teo}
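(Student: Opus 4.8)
The plan is the standard linearised-stability analysis via the characteristic equation. First I linearise \eqref{msys} around $E^0$, writing $T_j=T_j^0+x_j$, $i_j=y_j$, $V=z$, $A=w$ and discarding quadratic terms. Hypothesis \textbf{(H1)} gives $h_j(T_j,0)\equiv0$, hence $\frac{\partial h_j}{\partial T_j}(T_j^0,0)=0$, so the linearisation reads
\[
\der{x_j}=-d_jx_j-\dhdv\,z,\qquad
\frac{\partial y_j}{\partial\theta}+\frac{\partial y_j}{\partial t}=-\delta_j(\theta)y_j,
\]
\[
\der{z}=\sum_{j=1}^n\ith{_0^\infty P_j(\theta)y_j(\theta,t)}-cz,\qquad
\der{w}=-bw,\qquad y_j(0,t)=\dhdv\,z.
\]
Inserting $x_j=\bar x_je^{\xi t}$, $y_j(\theta,t)=\bar y_j(\theta)e^{\xi t}$, $z=\bar ze^{\xi t}$, $w=\bar we^{\xi t}$, the transport equation integrates along characteristics to $\bar y_j(\theta)=\bar y_j(0)e^{-\xi\theta}\sigma_j(\theta)$, and the boundary condition yields $\bar y_j(0)=\dhdv\,\bar z$. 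The $x_j$- and $w$-blocks are triangular and decoupled to first order and contribute only the eigenvalues $\xi=-d_j<0$ and $\xi=-b<0$; substituting $\bar y_j$ into the $z$-equation and cancelling $\bar z$ (the case $\bar z=0$ merely reproduces the previous eigenvalues) gives the characteristic equation
\[
\Delta(\xi):=\xi+c-\sum_{j=1}^n\dhdv\ith{_0^\infty P_j(\theta)e^{-\xi\theta}\sigma_j(\theta)}=0 .
\]

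Next I analyse $\Delta$. Since $\ith{_0^\infty P_j(\theta)\sigma_j(\theta)}=N_j$, we have $\Delta(0)=c-\sum_j N_j\dhdv=c(1-R_0)$. If $R_0>1$, then $\Delta(0)<0$ while $\Delta(\xi)\to+\infty$ as $\xi\to+\infty$ along the real axis (the integrals vanish since $P_j\sigma_j\in L^1$), so by the intermediate value theorem $\Delta$ has a positive real root and $E^0$ is unstable. If $R_0<1$, suppose $\xi$ is a root with $\operatorname{Re}\xi\ge0$; then $|e^{-\xi\theta}|\le1$ gives
\[
|\xi+c|\le\sum_{j=1}^n\dhdv\ith{_0^\infty P_j(\theta)\sigma_j(\theta)}=\sum_{j=1}^nN_j\dhdv=cR_0<c,
\]
contradicting $|\xi+c|\ge\operatorname{Re}\xi+c\ge c$. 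Hence every root of $\Delta$ has negative real part, and the same estimate bounds $|\xi|\le c(1+R_0)$, so by compactness the roots stay a uniform distance $\omega>0$ from the imaginary axis; together with $\xi=-d_j,-b$ this confines the entire point spectrum of the linearisation to $\{\operatorname{Re}\xi\le-\omega\}$.

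Finally I must turn this spectral picture into genuine (in)stability of the semiflow. For $R_0>1$ this is immediate from the principle of linearised instability, since a positive real eigenvalue has been produced. For $R_0<1$, because $\delta_j(\theta)\ge\dm>0$ the linearised generator at $E^0$ has essential growth bound $\le-\dm$ (the usual semigroup-perturbation argument underlying the results invoked in Section~\ref{sec:semigr}), so its asymptotic behaviour is determined by the point spectrum, which we have just placed strictly in the left half-plane; hence $E^0$ is locally asymptotically stable. I expect the main obstacle to be exactly this last step: ruling out a sequence of roots of the transcendental equation $\Delta(\xi)=0$ accumulating on the imaginary axis and ensuring the essential spectrum does not obstruct decay — both of which rely on the uniform lower bound $\dm$ on the death rates rather than on any naive ``finitely many roots'' count.
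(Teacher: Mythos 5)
Your argument is correct and follows essentially the same route as the paper: linearising at $E^0$, obtaining the characteristic equation $s+c-\sum_{j=1}^n\dhdv\ith{_0^\infty P_j(\theta)\sigma_j(\theta)e^{-s\theta}}=0$ together with the trivial roots $-d_j,-b$, ruling out roots with non-negative real part via the modulus estimate giving $c\le cR_0$ when $R_0<1$, and producing a positive real root by the intermediate value theorem when $R_0>1$. Your closing paragraph on the essential growth bound and the uniform spectral gap is a welcome extra layer of rigour that the paper itself leaves implicit, but it does not change the approach.
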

\begin{proof}
    Using the expression in \eqref{ij} for $i_j(\theta,t)$, we have
    \begin{align*}
    	\ith{_0^\infty & P_j(\theta)i_j(\theta,t)}                                                                                                                                                    \\
    	               & = \ith{_0^t P_j(\theta)h_j\big(T_j(t-\theta),\,V(t-\theta)\big)\sigma_j(\theta)}
    + \ith{_t^\infty P_j(\theta)i_j^0(\theta-t)\frac{\sigma_j(\theta)}{\sigma_j(\theta-t)}} \\
    	               & =\ith{_0^t k_j(\theta)h_j\big(T_j(t-\theta),\,V(t-\theta)\big)} + g_j(t),
    \end{align*}
    where $k_j(\theta)=P_j(\theta)\sigma_j(\theta)$ and $g_j(t)=\ith{_t^\infty P_j(\theta)i_j^0(\theta-t)\frac{\sigma_j(\theta)}{\sigma_j(\theta-t)}}$. Thus, we can rewrite \eqref{msys} as the following system:
    \begin{equation*}
    \begin{aligned}
    	\der{T_j(t)} & = \lambda_j-d_jT_j(t)-h_j(T_j(t),V(t))                                                                               \\
    	\der{V(t)}   & = \sum_{j=1}^n\left[\ith{_0^t k_j(\theta)h_j\big(T_j(t-\theta),\,V(t-\theta)\big)} + g_j(t)\right] - cV(t)-qA(t)V(t) \\
    	\der{A(t)}   & = \frac{kA(t)V(t)}{h+A(t)}-bA(t).
    \end{aligned}
    \end{equation*}
    The linearization of this system at $E^0$ is
    \begin{equation*}
    \begin{aligned}
    	\der{T_j(t)} & = -d_jT_j(t)-\dhdv V(t)                                                   \\
    	\der{V(t)}   & = \sum_{j=1}^n\left(\dhdv\ith{_0^t k_j(\theta)V(t-\theta)}\right) - cV(t) \\
    	\der{A(t)}   & = -bA(t),
    \end{aligned}
    \end{equation*}
    and the characteristic equation is
    \[\left[\prod_{j=1}^n(s+d_j)\right]\left[s+c-\sum_{j=1}^n\left(\dhdv\ith{_0^\infty k_j(\theta)e^{-s\theta}}\right)\right](s+b)=0.\]
    This equation has the negative roots $s=-b$ and $s=-d_j$, $j=1,\ldots,n$. The rest of its roots are the solutions of
    \begin{equation}\label{chareq}
    \phi(s):=s+c-\sum_{j=1}^n\left(\dhdv\ith{_0^\infty k_j(\theta)e^{-s\theta}}\right)=0.
    \end{equation}
    Suppose that \eqref{chareq} has a root $s_0$ with non-negative real part. Then
    \begin{align*}
    	c\le|s_0+c| & =\left|\sum_{j=1}^n\left(\dhdv\ith{_0^\infty k_j(\theta)e^{-s_0\theta}}\right)\right|    \\
    	            & \le\sum_{j=1}^n\left(\dhdv\ith{_0^\infty k_j(\theta)\left|e^{-s_0\theta}\right|}\right),
    \end{align*}
    but
    \[\ith{_0^\infty k_j(\theta)\left|e^{-s_0\theta}\right|}\le\ith{_0^\infty k_j(\theta)}=\ith{_0^\infty P_j(\theta)\sigma_j(\theta)}=N_j,\]
    so
    \[c\le\sum_{j=1}^n\left(\dhdv N_j\right)=cR_0.\]
    This implies that $1\le R_0$. Therefore, when $R_0<1$ all solutions of \eqref{chareq} have negative real part and thus, in such case, $E^0$ is locally asymptotically stable.
    
    Otherwise, if $R_0>1$ we have
    \[\phi(0)=c-\sum_{j=1}^n\left(\dhdv\ith{_0^\infty k_j(\theta)}\right)
    =c(1-R_0)<0,\]
    while $\phi(s)\to\infty$ as $s\to\infty$. Then $\phi(s)$ has at least one positive real root. Therefore, $E^0$ is unstable when $R_0>1$.
\end{proof}\medskip

We will now study the stability of the immune-free infected steady state $E^*$. For that, we define the reproductive number of antibody response $R_{AN}$ as
\[R_{AN}=\frac{kV^*}{bh}.\]
We will also need the following result, which shows the relation between $R_{AN}$ and the viral reproduction number $R_*$.
\begin{lem}
    $R_*<1\Leftrightarrow R_{AN}<1$, $R_*=1\Leftrightarrow R_{AN}=1$, and $R_*>1\Leftrightarrow R_{AN}>1$.
\end{lem}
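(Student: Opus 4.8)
The plan is to express both reproduction numbers through the monotone function $F$ that already appeared in the proof of the steady‑state theorem and then to read off the equivalences from its strict monotonicity. Recall that
\[F(V)=\sum_{j=1}^nN_j\hj(f_j(V),V)-c\]
is continuous and, by Lemma \ref{lemhj} together with $N_j>0$, strictly decreasing on $\mathbb{R}_+$, and that $V^*$ is characterised as the unique root $F(V^*)=0$. (This root exists precisely under the standing assumption $R_0>1$, which is also exactly what makes $R_{AN}=kV^*/(bh)$ meaningful; since Lemma \ref{lemhj} gives $R_*<R_0$, the statement to be proved is in any case vacuous unless $R_0>1$, so this causes no difficulty.)

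First I would observe that, directly from the definition \eqref{rstar} of $R_*$,
\[F\!\left(\tfrac{bh}{k}\right)=\sum_{j=1}^nN_j\hj\big(f_j(\tfrac{bh}{k}),\tfrac{bh}{k}\big)-c=cR_*-c=c\,(R_*-1),\]
so the sign of $F(bh/k)$ is exactly the sign of $R_*-1$. On the other hand, from $R_{AN}=kV^*/(bh)$ we have $R_{AN}<1\Leftrightarrow V^*<bh/k$, with the analogous statements for $=$ and $>$.

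Next I would combine these two facts via the strict monotonicity of $F$ around its root $V^*$. Since $F$ is strictly decreasing and $F(V^*)=0$, we get $V^*<\tfrac{bh}{k}\Leftrightarrow F(V^*)>F(\tfrac{bh}{k})\Leftrightarrow 0>c(R_*-1)\Leftrightarrow R_*<1$, and likewise $V^*=\tfrac{bh}{k}\Leftrightarrow F(\tfrac{bh}{k})=0\Leftrightarrow R_*=1$ and $V^*>\tfrac{bh}{k}\Leftrightarrow R_*>1$. Chaining this with the equivalences $R_{AN}\lessgtr1\Leftrightarrow V^*\lessgtr\tfrac{bh}{k}$ from the previous paragraph yields all three claimed equivalences simultaneously.

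There is essentially no obstacle here: the argument is a one‑line consequence of the monotonicity established in Lemma \ref{lemhj}, and the only point needing a word of care is the domain issue (that $R_{AN}$ is defined only when $E^*$, hence $V^*$, exists), which I address by the remark in the first paragraph.
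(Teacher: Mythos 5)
Your proof is correct and follows essentially the same route as the paper: both arguments rest on the strict monotonicity of $F$ (from Lemma \ref{lemhj}), the characterisation $F(V^*)=0$, and the identity $F(\tfrac{bh}{k})=c(R_*-1)$. The only cosmetic difference is that you obtain the biconditionals directly from the order-reversing property of $F$, whereas the paper proves one implication per case and lets the trichotomy supply the converses; your added remark about $R_{AN}$ being defined only when $V^*$ exists (i.e.\ $R_0>1$) is a harmless clarification.
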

\begin{proof}
    Suppose that $R_{AN}<1$. Then $V^*<\frac{bh}{k}$. Recall that $V^*$ satisfies
    \[F(V^*)=\sum_{j=1}^nN_j\hj\big(f_j(V^*),V^*\big)-c=0.\]
    Since $F$ is strictly decreasing, the condition $V^*<\frac{bh}{k}$ implies that $F\left(\frac{bh}{k}\right)<F(V^*)$, i.e.,
    \[\sum_{j=1}^nN_j\hj\big(f_j(\tfrac{bh}{k}),\,\tfrac{bh}{k}\big)-c<0,\]
    that is,
    \[c(R_*-1)<0.\]
    Thus, $R_*<1$. The proof that $R_{AN}=1\implies R_*=1$ and that $R_{AN}>1\implies R_*>1$ is similar, from which the result follows.
\end{proof}\medskip

\begin{teo}
    The immune-free infected steady state $E^*$ is unstable when $R_*>1$.
\end{teo}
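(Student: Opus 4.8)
The plan is to show that the linearization of the system at $E^*$ admits a characteristic equation with a positive real root whenever $R_*>1$. First I would repeat the reduction used in the proof of the stability of $E^0$: using the representation \eqref{ij}, rewrite the $V$-equation with the convolution kernels $k_j(\theta)=P_j(\theta)\sigma_j(\theta)$ and the fading-memory terms $g_j(t)$, so that the system becomes an ODE-with-delay system in $(T_1,\ldots,T_n,V,A)$. Linearizing at $E^*=(T_1^*,i_1^*,\ldots,T_n^*,i_n^*,V^*,0)$, the $A$-equation decouples with eigenvalue $\tfrac{kV^*}{h}-b=b(R_{AN}-1)$, which is positive precisely when $R_{AN}>1$, and by the preceding Lemma this is equivalent to $R_*>1$. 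So the quickest route is: exhibit this single eigenvalue $s=b(R_{AN}-1)>0$ of the linearized system at $E^*$ and conclude instability immediately.

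More carefully, I would write the Jacobian block structure: perturbations in $A$ feed into the $V$-equation (through the $-qA^*V^*-qAV^*$ term, but $A^*=0$, so only $-qV^*A$ survives) and into the $A$-equation itself through $\tfrac{kA V^*}{h+A^*}-bA=\big(\tfrac{kV^*}{h}-b\big)A$ at lowest order. Crucially, the $A$-equation at the linear level does \emph{not} depend on the perturbations of $T_j$, $i_j$ or $V$ — it is $\der{A}=\big(\tfrac{kV^*}{h}-b\big)A$ — so the characteristic equation factors as
\[
\Big(s-\tfrac{kV^*}{h}+b\Big)\cdot\Delta_1(s)=0,
\]
where $\Delta_1(s)$ is the characteristic function of the remaining $(T_j,V)$-subsystem (which is exactly the characteristic equation governing the immune-free model). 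The first factor vanishes at $s=\tfrac{kV^*}{h}-b=b(R_{AN}-1)$. When $R_*>1$, the Lemma gives $R_{AN}>1$, hence this root is real and strictly positive, so $E^*$ is unstable.

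The only genuine points to check are (i) that the $A$-perturbation really is decoupled at the linear level from the other variables — this is immediate because $A^*=0$ kills every cross term in the $A$-equation except the linear-in-$A$ one, and the cross term $-qV^*A$ appearing in the $V$-equation only affects $\Delta_1$ through a lower-triangular coupling that does not alter the factorization of the characteristic polynomial; and (ii) the standard fact that the existence of a characteristic root with positive real part implies linear instability of the steady state for this class of age-structured (equivalently, delay) systems, which follows from the semigroup framework already invoked via \cite{hale2010asymptotic, magal2004eventual}. I expect step (i) — correctly setting up the block-triangular Jacobian and justifying the clean factorization of the characteristic equation — to be the main (though modest) obstacle; everything else is a direct application of the earlier Lemma relating $R_*$ and $R_{AN}$.
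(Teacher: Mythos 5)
Your proposal is correct and follows essentially the same route as the paper: linearize at $E^*$, observe that because $A^*=0$ the $A$-equation decouples at the linear level into $\der{A}=\bigl(\tfrac{kV^*}{h}-b\bigr)A$, so $s_1=\tfrac{kV^*}{h}-b$ is a characteristic root, and the preceding lemma relating $R_*$ and $R_{AN}$ gives $s_1>0$ when $R_*>1$. The extra remarks you make about the block-triangular structure and the factorization of the characteristic equation are consistent with, and slightly more explicit than, the paper's argument.
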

\begin{proof}
    The linearization of system \eqref{msys} at $E^*$ is
    \begin{equation*}
    \begin{aligned}
    	\der{T_j(t)} & = -\left(d_j + \frac{\partial h_j}{\partial T_j}(T_j^*,V^*)\right)T_j(t)-\frac{\partial h_j}{\partial V}(T_j^*,V^*)V(t) \\
    	\der{V(t)}   & = \sum_{j=1}^n\left(\frac{\partial h_j}{\partial T_j}(T_j^*,V^*)\ith{_0^t k_j(\theta)T_j(t-\theta)}\right)              \\
    	             & \quad+\sum_{j=1}^n\left(\frac{\partial h_j}{\partial V}(T_j^*,V^*)\ith{_0^t k_j(\theta)V(t-\theta)}\right) - cV(t)      \\
    	\der{A(t)}   & = \left(\frac{kV^*}{h}-b\right)A(t).
    \end{aligned}
    \end{equation*}
    Thus, one of the roots of the characteristic equation is $s_1=\frac{kV^*}{h}-b$. If $R_*>1$, then $R_{AN}=\frac{kV^*}{bh}>1$, which implies that $s_1$ is a positive root. Hence the theorem.
\end{proof}\medskip

\section{Global stability}\label{sec:glob}

In order to establish the global stability of the equilibrium $E^0$ when $R_0\le1$, we will make use of LaSalle's invariance principle and a Lyapunov functional, similar to those used in \cite{wang2017age} for a multi-target cell model with general incidence.

\begin{teo}\label{teo:E0}
    If $R_0\le1$, then the infection-free steady state $E^0$ is globally asymptotically stable.
\end{teo}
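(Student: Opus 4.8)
The plan is to construct a Lyapunov functional of the classical type used for age-structured models, adapted to the multi-target setting, and then invoke LaSalle's invariance principle on the global compact attractor. Recall that $T_j^0=\lambda_j/d_j$, and note the useful identity $\lambda_j = d_j T_j^0$. Since each $h_j(T_j,V)=V\hj(T_j,V)$ and $\hj$ is increasing in $T_j$ (from \textbf{(H2)}), while $T_j(t)\le T_j^0$ on $\Omega$, we will exploit the inequality $h_j(T_j,V) \le V\hj(T_j^0,V) \le V\,\psi_j(0)= V\dhdv$ — actually more carefully, $\hj(T_j,V)\le\hj(T_j^0,V)\le\hj(T_j^0,0)=\dhdv$ using \textbf{(H3)}. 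This is what will let the virus equation be bounded by the $R_0$ threshold.

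First I would define, for each $j$, the kernel-weighted integral term $\displaystyle \int_0^\infty \gamma_j(\theta) i_j(\theta,t)\,\mathrm{d}\theta$, where $\gamma_j(x)=\int_x^\infty P_j(\tau)e^{-\int_x^\tau \delta_j(s)\,\mathrm{d}s}\,\mathrm{d}\tau$ is the function already introduced in the uniform persistence proof; it satisfies $\gamma_j(0)=N_j$ and $\gamma_j'(x)=\delta_j(x)\gamma_j(x)-P_j(x)$. Then I would propose the candidate
\begin{equation*}
L(t)=\sum_{j=1}^n\left[ T_j(t)-T_j^0-\int_{T_j^0}^{T_j(t)}\frac{\hj(T_j^0,0)}{\hj(\xi,0)}\,\mathrm{d}\xi + \frac{1}{c}\int_0^\infty \gamma_j(\theta) i_j(\theta,t)\,\mathrm{d}\theta\right] + \frac1c V(t) + \frac{q}{kc}A(t),
\end{equation*}
or possibly with the simpler linear term $T_j - T_j^0$ replaced as needed; the precise form of the $T_j$-term must be chosen so that its derivative contributes $-\,d_j(T_j-T_j^0)\cdot(\text{something nonnegative})$ plus a term that cancels the incidence contribution. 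Differentiating along solutions, using $\der{V}$, the transport equation (integrating by parts in $\theta$, with the boundary term $\gamma_j(0)i_j(0,t)=N_j h_j(T_j,V)$ and the decay $-\gamma_j(\theta)\delta_j i_j$ combining with $-\gamma_j' i_j = (P_j-\delta_j\gamma_j)i_j$ to leave exactly $\int_0^\infty P_j i_j\,\mathrm{d}\theta$), and the $A$-equation, the virus production terms cancel and one is left with something of the form $\der{L} \le \sum_j \big[-d_j(\text{cell term}) + (\text{terms involving } h_j(T_j,V) - V\dhdv)\big] + V\dhdv\sum_j \tfrac{N_j}{c} - V - \tfrac{qb}{kc}A$, i.e. $\der{L}\le \sum_j(\text{nonpositive}) + V(R_0-1)\dhdv\cdot(\cdots) - \tfrac{qb}{kc}A \le 0$ when $R_0\le1$.

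The main obstacle will be getting the $T_j$ cell terms to be manifestly nonpositive simultaneously with making the cross-terms telescope correctly — this is exactly where hypothesis \textbf{(H2)} on monotonicity of $h_j$ (hence of $\hj(\cdot,0)$) is needed: the standard trick is that $\big(1-\tfrac{\hj(T_j^0,0)}{\hj(T_j,0)}\big)\der{T_j} = \big(1-\tfrac{\hj(T_j^0,0)}{\hj(T_j,0)}\big)\big(d_j(T_j^0-T_j) - h_j(T_j,V)\big)$, and the first piece $\big(1-\tfrac{\hj(T_j^0,0)}{\hj(T_j,0)}\big)d_j(T_j^0-T_j)$ is $\le0$ since $\hj(\cdot,0)$ is increasing so the two factors have opposite signs. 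Then one must show the remaining incidence-related terms, after summing over $j$ and combining with $\tfrac1c\der{V}$, do not exceed $(R_0-1)$ times a nonnegative quantity; here one bounds $h_j(T_j,V)=V\hj(T_j,V)\le V\hj(T_j^0,V)\le V\hj(T_j^0,0)=V\dhdv$ using \textbf{(H2)} and \textbf{(H3)}, so $\tfrac1c\sum_j N_j h_j(T_j,V) \le \tfrac{V}{c}\sum_j N_j\dhdv = V R_0 \le V$. After establishing $\der{L}\le0$, I would identify the largest invariant subset of $\{\der{L}=0\}$: equality forces $V\equiv0$ (when $R_0<1$) or, when $R_0=1$, forces $T_j\equiv T_j^0$ and hence via the $T_j$-equation $h_j(T_j^0,V)\equiv0$, which by \textbf{(H2)} gives $V\equiv0$; then the $V$-equation forces $\int_0^\infty P_j i_j\,\mathrm{d}\theta\equiv0$ and, with positivity of $\dm$, $i_j\equiv0$, and finally $A\equiv0$. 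By LaSalle's invariance principle applied on the compact global attractor guaranteed by Theorem 3.2, every solution converges to $E^0$, which combined with local stability (Theorem 4.3) for $R_0<1$ — and a separate direct argument for the boundary case $R_0=1$ using the same functional — yields global asymptotic stability.
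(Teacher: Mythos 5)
Your overall strategy is the same as the paper's: a Volterra-type Lyapunov functional combining a Goh-type $T_j$ term, the kernel-weighted integrals $\int_0^\infty\gamma_j(\theta)i_j(\theta,t)\,\mathrm{d}\theta$ with $\gamma_j(0)=N_j$ and $\gamma_j'=\delta_j\gamma_j-P_j$, a linear $V$ term, integration by parts in $\theta$, and LaSalle on the compact attractor. However, there is a genuine gap at the one step that carries the proof: with your weights the crucial cancellation you invoke does not happen. In your $L$ (coefficient $1$ on the $T_j$-term, $1/c$ on the $i_j$-integral and on $V$), the age-boundary term is $\tfrac{N_j}{c}h_j(T_j,V)$, while the incidence contribution from the $T_j$-equation is $-\bigl(1-\tfrac{\hj(T_j^0,0)}{\hj(T_j,0)}\bigr)h_j(T_j,V)$; these do not cancel. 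After the bounds you propose, you are left with the nonnegative leftover $\sum_j h_j(T_j,V)\bigl(\tfrac{\hj(T_j^0,0)}{\hj(T_j,0)}-1\bigr)$, and the only available negative term $-V$ has already been spent against $\tfrac1c\sum_jN_jh_j(T_j,V)\le VR_0$, so $\der{L}\le0$ does not follow from your estimates. You explicitly defer ``the precise form of the $T_j$-term'', but that choice is exactly the crux. The paper resolves it by weighting the $j$-th $T$-term by $N_j$ (with weight $1$ on $\int\gamma_ji_j$ and on $V$): then $-N_jh_j(T_j,V)$ from $\der{T_j}$ cancels the boundary term $+N_jh_j(T_j,V)$ exactly, the only residual incidence term is $N_jh_j(T_j,V)\lhj\le N_jV\dhdv$ (by \textbf{(H2)}--\textbf{(H3)}), and hence $\der{W}\le cV(R_0-1)\le0$.

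A second, smaller slip: your antibody term $\tfrac{q}{kc}A$ generates $+\tfrac{q}{c}\tfrac{AV}{h+A}$, which is dominated by the $-\tfrac{q}{c}AV$ coming from $\tfrac1c\der{V}$ only when $h+A\ge1$, which is not guaranteed (the paper's simulations even use $h=0.2$). The coefficient should be $\tfrac{qh}{kc}$, so that $\tfrac{qh}{c}\tfrac{AV}{h+A}\le\tfrac{q}{c}AV$; alternatively, do as the paper does: omit the $A$-term, discard $-qAV\le0$, and recover $A\to0$ on the LaSalle limit set from $\der{A}=-bA$ once $V\equiv0$ there. Your identification of the invariant set and the appeal to the compact global attractor are otherwise consistent with the paper's argument.
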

\begin{proof}
    Assume $R_0\le1$. For $j=1,\ldots,n$, define the function
    \begin{equation}\label{gamj}
    \gamma_j(x)=\ith{_x^\infty P_j(\theta)e^{-\itau{_x^\theta\delta_j(\tau)}}}.
    \end{equation}
    We will use the Lyapunov functional $W=W\big(T_1,i_1(\cdot,\cdot),\cdots,T_n,i_n(\cdot,\cdot),V,A\big)$ given by
    \begin{equation*}
    W=\sum_{j=1}^nN_j\left(T_j-T_j^0-\int_{T_j^0}^{T_j}\lhj[\eta_j]\;\textrm{d}\eta_j\right)
    + \sum_{j=1}^n\ith{_0^\infty\gamma_j(\theta)i_j(\theta,t)} + V.
    \end{equation*}
    
    Then $W$ is non-negative and $W(E^0)=0$. The time derivative of $W$ along the solutions of \eqref{msys} is
    \begin{align*}
    	\der{W} & =\sum_{j=1}^nN_j\left(1-\lhj\right)\left(\lambda_j-d_jT_j-h_j(T_j,V)\right)                                                                                             \\
    	        & \quad-\sum_{j=1}^n\ith{_0^\infty\gamma_j(\theta)\left(\frac{\partial i_j(\theta,t)}{\partial\theta}+\delta_j(\theta)i_j(\theta,t)\right)}                               \\
    	        & \quad+\sum_{j=1}^n\ith{_0^\infty P_j(\theta)i_j(\theta,t)} - cV-qAV                                                                                                     \\
    	        & =\sum_{j=1}^nN_j\left(1-\lhj\right)d_j\left(T_j^0-T_j\right) - \sum_{j=1}^nN_j\left(1-\lhj\right)h_j(T_j,V)                                                             \\
    	        & \quad-\sum_{j=1}^n\int_0^\infty\gamma_j(\theta)\;\text{d}i_j(\theta,t) - \sum_{j=1}^n\ith{_0^\infty\big(\gamma_j(\theta)\delta_j(\theta)-P_j(\theta)\big)i_j(\theta,t)} \\
    	        & \quad-cV-qAV.
    \end{align*}
    Since $0\le T_j\le\tfrac{\lambda_j}{d_j}=T_j^0$, then $h_j(T_j,V)\le h_j(T_j^0,V)$ for all $V\ge0$, so $1\le\lhj$. Thus, we have $\left(1-\lhj\right)\left(T_j^0-T_j\right)\le0$ and also $-qAV\le0$, so
    \begin{align*}
    	\der{W} & \le -\sum_{j=1}^nN_j\left(1-\lhj\right)h_j(T_j,V) - \sum_{j=1}^n\int_0^\infty\gamma_j(\theta)\;\text{d}i_j(\theta,t)                          \\
    	        & \quad -\sum_{j=1}^n\ith{_0^\infty\big(\gamma_j(\theta)\delta_j(\theta)-P_j(\theta)\big)i_j(\theta,t)} - cV                                    \\
    	        & =-\sum_{j=1}^nN_jh_j(T_j,V) + \sum_{j=1}^nN_jh_j(T_j,V)\lhj                                                                                   \\
    	        & \quad-\sum_{j=1}^n\left(\gamma_j(\theta)i_j(\theta,t)\Big|_{\theta=0}^{\theta=\infty} - \ith{_0^\infty i_j(\theta,t)\gamma_j'(\theta)}\right) \\
    	        & \quad-\sum_{j=1}^n\ith{_0^\infty\big(\gamma_j(\theta)\delta_j(\theta)-P(\theta)\big)i_j(\theta,t)}-cV,
    \end{align*}
    where we have used integration by parts to expand $\int_0^\infty\gamma_j(\theta)\;\text{d}i_j(\theta,t)$. By \eqref{gamj} we know that $\gamma_j$ satisfies $\gamma_j'(x)=\delta_j(x)\gamma_j(x)-P_j(x)$, so
    \[\sum_{j=1}^n\ith{_0^\infty i_j(\theta,t)\gamma_j'(\theta)} - \sum_{j=1}^n\ith{_0^\infty\big(\gamma_j(\theta)\delta_j(\theta)-P_j(\theta)\big)i_j(\theta,t)}=0,\]
    and since $\gamma_j(0)=N_j$, then $N_jh_j(T_j,V)=\gamma_j(\theta)i_j(\theta,t)\big|_{\theta=0}$. Thus,
    \begin{align*}
    	\der{W} & \le-\sum_{j=1}^nN_jh_j(T_j,V) + \sum_{j=1}^nN_jh_j(T_j,V)\lhj                                                                \\
    	        & \quad-\sum_{j=1}^n\left(\gamma_j(\theta)i_j(\theta,t)\Big|_{\theta=0}^{\theta=\infty}\right) -cV                             \\
    	        & =\sum_{j=1}^nN_jh_j(T_j,V)\lhj - \sum_{j=1}^n\gamma_j(\theta)i_j(\theta,t)\Big|_{\theta=\infty} - cV                         \\
    	        & =cV\left(\sum_{j=1}^n\frac{N_jh_j(T_j,V)}{cV}\lhj-1\right) - \sum_{j=1}^n\gamma_j(\theta)i_j(\theta,t)\Big|_{\theta=\infty}.
    \end{align*}
    Since $0\le V$ and $\hj(T_j,V)$ is non-increasing with respect to $V$, then
    \[\hj(T_j,V)\le\hj(T_j,0)
    =\lim_{V\to0}\frac{h_j(T_j,V)-h_j(T_j,0)}{V}
    =\frac{\partial h_j}{\partial V}(T_j,0).\]
    We also have
    \[\lhj=\lim_{V\to0}\frac{\frac{h_j(T_j^0,V)-h_j(T_j,0)}{V}}{\frac{h_j(T_j^0,V)-h_j(T_j,0)}{V}}=\frac{\dhdv}{\frac{\partial h_j}{\partial V}(T_j,0)}\]
    so
    \begin{align*}
    	\sum_{j=1}^n\frac{N_jh_j(T_j,V)}{cV}\lhj-1 & =\sum_{j=1}^n\frac{N_j}{c}\hj(T_j,V)\frac{\dhdv}{\frac{\partial h_j}{\partial V}(T_j,0)}-1 \\
    	                                           & \le \sum_{j=1}^n\frac{N_j}{c}\dhdv-1                                                       \\
    	                                           & =R_0-1\le0.
    \end{align*}
    Hence we have $\der{W}\le0$ and the equality holds if and only if $T_j=T_j^0,$ $i_j=0$, $V=0$. Thus, the largest positively invariant subset of the state space where $\der{W}=0$ is the set
    \[S=\left\{\big(T_1,i_1(\cdot),\ldots,T_n,i_n(\cdot),V,A\big)\in\Omega\mid T_j=T_j^0,i_j(\cdot)=0,V=0\right\}.\]
    By LaSalle's invariance principle, this implies that all solutions with positive initial conditions approach $S$ as $t\to\infty$. On the other hand, any solution of \eqref{msys} contained in $S$ satisfies $\der{A}=-bA$ and thus, $A(t)\to0$ as $t\to\infty$, that is, the solution approaches $E^0$. Therefore, we conclude that $E^0$ is globally asymptotically stable in the state space $\Omega$.
\end{proof}\medskip

We will now use a Lyapunov functional that is constructed as a combination of those used in \cite{duan2017global} and \cite{wang2017age} to prove the global stability of $E^*$.

\begin{teo}\label{teo:Es}
    If $R_*<1<R_0$, then the immune-free infected steady state $E^*$ is globally asymptotically stable.
\end{teo}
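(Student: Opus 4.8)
The plan is to prove global attractivity of $E^*$ by a Lyapunov–LaSalle argument of the kind used in \cite{duan2017global,wang2017age}, extended to the $n$ target‑cell classes, to a general incidence satisfying only \textbf{(H1)}–\textbf{(H3)}, and to the saturated antibody term. Since $R_0>1$, the uniform persistence theorem supplies a compact attractor $\mathcal{A}_0\subset\mathcal{M}_0$ made of complete orbits on which $T_j,V$ and $i_j(\theta,\cdot)$ stay positive, so it suffices to prove convergence to $E^*$ there (every orbit of $\mathcal{M}_0$ is attracted to $\mathcal{A}_0$). Writing $g(x)=x-1-\ln x\ge0$ (zero only at $x=1$), $h_j^*=h_j(T_j^*,V^*)$, and recalling $\lambda_j=d_jT_j^*+h_j^*$, $i_j^*(\theta)=h_j^*\sigma_j(\theta)$, $\sum_{j}N_jh_j^*=cV^*$, $\gamma_j(0)=N_j$ and $\gamma_j'=\delta_j\gamma_j-P_j$ (see \eqref{gamj}), I would use
\begin{equation*}
L=\sum_{j=1}^nN_j\!\left(T_j-T_j^*-\int_{T_j^*}^{T_j}\!\Hss[\eta_j]\;\mathrm{d}\eta_j\right)+\sum_{j=1}^n\ith{_0^\infty\gamma_j(\theta)\,i_j^*(\theta)\,g\!\left(\tfrac{i_j(\theta,t)}{i_j^*(\theta)}\right)}+V^*g\!\left(\tfrac{V}{V^*}\right)+\tfrac{qh}{k}A .
\end{equation*}
Each term is nonnegative — $\Hss[\eta_j]\gtrless1$ according as $\eta_j\lessgtr T_j^*$ by \textbf{(H2)}, and $g\ge0$ — and $L(E^*)=0$. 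The coefficient $qh/k$ of the antibody term is the crucial design choice: any constant in $[qV^*/b,\;qh/k]$ works, and this interval is non‑degenerate exactly because the lemma relating $R_*$ and $R_{AN}$ gives $R_*<1\iff R_{AN}=kV^*/(bh)<1$, i.e. $V^*<bh/k$.

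Differentiating $L$ along \eqref{msys} — substituting $\lambda_j=d_jT_j^*+h_j^*$ in the $T_j$‑terms; integrating the age integrals by parts in $\theta$ using the transport equation, $\gamma_j'=\delta_j\gamma_j-P_j$, and the boundary value $i_j(0,t)=h_j(T_j,V)$ (so the $\theta=0$ term is $N_jh_j^*g(h_j(T_j,V)/h_j^*)$); and using $(1-V^*/V)\dot V$ together with $\sum_jN_jh_j^*=cV^*$ — one obtains after regrouping
\begin{align*}
\der{L}&=\sum_{j=1}^nN_jd_j\big(T_j^*-T_j\big)\left(1-\Hss\right)-\sum_{j=1}^n\lim_{\theta\to\infty}\Big[\gamma_j(\theta)\,i_j^*(\theta)\,g\big(\tfrac{i_j(\theta,t)}{i_j^*(\theta)}\big)\Big]+qA\!\left(V^*-\tfrac{bh}{k}\right)-\tfrac{qVA^2}{h+A}\\
&\quad+\sum_{j=1}^nh_j^*\ith{_0^\infty k_j(\theta)\left[-g\!\left(\Hss\right)-g\!\left(\tfrac{V^*i_j(\theta,t)}{V\,i_j^*(\theta)}\right)+\Delta_j\right]},
\end{align*}
where $k_j(\theta)=P_j(\theta)\sigma_j(\theta)$ and $\Delta_j=\dfrac{h_j(T_j,V)}{h_j(T_j,V^*)}-\dfrac{V}{V^*}-\ln\dfrac{\hj(T_j,V)}{\hj(T_j,V^*)}$. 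The first sum is $\le0$ because its two factors carry opposite signs (\textbf{(H2)}: $h_j(\cdot,V^*)$ increasing); the limit term is $\le0$ since $\gamma_j,i_j^*,g\ge0$; and $qA(V^*-\tfrac{bh}{k})-\tfrac{qVA^2}{h+A}\le0$ since $V^*<bh/k$.

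The decisive point is $\Delta_j\le0$, and this is precisely where \textbf{(H3)} enters. Setting $s=V/V^*$ and $u=\hj(T_j,V)/\hj(T_j,V^*)$, so that $su=h_j(T_j,V)/h_j(T_j,V^*)$, hypothesis \textbf{(H3)} makes $V\mapsto u$ non‑increasing while \textbf{(H2)} makes $V\mapsto su$ increasing, so $u-1$ and $su-1$ have opposite signs; using $-\ln u\le u^{-1}-1$,
\[\Delta_j=su-s-\ln u\le-s(1-u)+\frac{1-u}{u}=(1-u)\,\frac{1-su}{u}\le0 .\]
Hence $\der{L}\le0$ on $\mathcal{A}_0$. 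On $\{\der{L}=0\}$ the first sum forces $T_j\equiv T_j^*$, whence $\der{T_j}\equiv0$ together with \textbf{(H2)} forces $V\equiv V^*$; then $g\big(V^*i_j/(Vi_j^*)\big)=0$ on $\{k_j>0\}$ and the boundary condition give $i_j(0,t)=h_j(T_j^*,V^*)=i_j^*(0)$ for all $t$, so the transport equation yields $i_j(\cdot,t)\equiv i_j^*(\cdot)$; and the antibody term forces $A\equiv0$. Thus $\{E^*\}$ is the only invariant subset of $\{\der{L}=0\}$, and by LaSalle's invariance principle $U(t)y\to E^*$ for every $y\in\mathcal{M}_0$; combined with the Lyapunov property of $L$, this establishes global asymptotic stability of $E^*$ in $\mathcal{M}_0$.

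The main obstacle I foresee is the algebraic bookkeeping that collapses $\der{L}$ to the displayed form — arranging the target‑cell, age‑integral and virus contributions so that only perfect $-g(\cdot)$ terms and the single remainder $\Delta_j$ survive — together with the verification that $\Delta_j\le0$ under only \textbf{(H1)}–\textbf{(H3)} (for the bilinear or Holling‑II incidences of the references $\Delta_j$ is degenerate, so the general case genuinely relies on the monotonicity of $\hj$ in $V$ imposed by \textbf{(H3)}). A lesser technical matter is ensuring that $L$ and $\der{L}$ are well defined and that the $\theta\to\infty$ limit causes no trouble, which is why the argument is run on the compact attractor $\mathcal{A}_0\subset\mathcal{M}_0$ provided by uniform persistence rather than on all of $\mathcal{X}_{0+}$.
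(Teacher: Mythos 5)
Your proposal is correct and follows essentially the same route as the paper: the identical Lyapunov functional (including the $\tfrac{qh}{k}A$ term justified by $R_*<1\Leftrightarrow R_{AN}<1$, i.e.\ $V^*<bh/k$), the same integration-by-parts decomposition of $\der{W_1}$ into nonpositive groups, and the same LaSalle conclusion. Your remainder bound $\Delta_j\le 0$ is precisely the paper's key inequality $\mathcal{H}\bigl(h_j(T_j,V)/h_j(T_j,V^*)\bigr)\le\mathcal{H}\bigl(V/V^*\bigr)$ obtained from \textbf{(H2)}--\textbf{(H3)}, merely verified via $-\ln u\le u^{-1}-1$ instead of the monotonicity of $\mathcal{H}$ on either side of $1$, and your invariance-set analysis just spells out the paper's final step in more detail.
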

\begin{proof}
    Consider the Lyapunov functional
    \begin{equation}\label{lyapEs}
    \begin{aligned}
    	W_1 & =\sum_{j=1}^nN_j\left(T_j-T_j^*-\int_{T_j^*}^{T_j}\Hss[\xi_j]\;\textrm{d}\xi_j\right) +  \sum_{j=1}^n\ith{_0^\infty\gamma_j(\theta)i_j^*(\theta)\mathcal{H}\left(\frac{i_j(\theta,t)}{i_j^*(\theta)}\right)} \\
    	    & \quad+ V^*\mathcal{H}\left(\frac{V}{V^*}\right) + \frac{qh}{k}A,
    \end{aligned} 
    \end{equation}
    where $\mathcal{H}(x)=x-1-\ln x$ and $\gamma_j$ is defined by \eqref{gamj}. Then this functional is well-defined based on the uniform persistence of the system. The function $\mathcal{H}$ is non-negative and equals zero only at 1, so it is clear that $W_1$ has a global minimum at the equilibrium $E^*$. The derivative of $W_1$ along the solutions of \eqref{msys} is
    \begin{align*}
    	\der{W_1} & =\sum_{j=1}^nN_j\left(1-\Hss\right)\big(\lambda_j-d_jT_j-h_j(T_j,V)\big)                                                                      \\
    	          & \quad +\sum_{j=1}^n\ith{_0^\infty\gamma_j(\theta)\left(1-\frac{i_j^*(\theta)}{i_j(\theta,t)}\right)\frac{\partial i_j(\theta,t)}{\partial t}} \\
    	          & \quad+\left(1-\frac{V^*}{V}\right)\left(\sum_{j=1}^n\ith{_0^\infty P_j(\theta)i_j(\theta,t)} - cV-qAV\right)                                  \\
    	          & \quad+\frac{qh}{k}\left(\frac{kAV}{h+A}-bA\right).
    \end{align*}
    By \eqref{FV} we have $\left(1-\frac{V^*}{V}\right)(-cV)=c(V^*-V)=\sum_{j=1}^nN_j\hj(T_j^*,V^*)(V^*-V)$. Also, notice that $\lambda_j=d_jT_j^*+h_j(T_j^*,V^*)$ and
    \begin{align*}
    	\ith{_0^\infty & \gamma_j(\theta)\left(1-\frac{i_j^*(\theta)}{i_j(\theta,t)}\right)\frac{\partial i_j(\theta,t)}{\partial t}}                                                                                                                                                                                                              \\
    	               & = -\ith{_0^\infty\gamma_j(\theta)i_j^*(\theta)\frac{\partial}{\partial t}\mathcal{H}\left(\frac{i_j(\theta,t)}{i_j^*(\theta)}\right)}                                                                                                                                                                                     \\
    	               & =-\left.\gamma_j(\theta)i_j^*(\theta)\mathcal{H}\left(\frac{i_j(\theta,t)}{i_j^*(\theta)}\right)\right|_{\theta=0}^{\theta=\infty} + \ith{_0^\infty\mathcal{H}\left(\frac{i_j(\theta,t)}{i_j^*(\theta)}\right)\left(\gamma'(\theta)i_j^*(\theta) + \gamma_j(\theta)\frac{\mathrm{d}i^*(\theta)}{\mathrm{d}\theta}\right)} \\
    	               & =\gamma_j(0)i_j^*(0)\mathcal{H}\left(\frac{i_j(0,t)}{i_j^*(0)}\right) - \left.\gamma_j(\theta)i_j^*(\theta)\mathcal{H}\left(\frac{i_j(\theta,t)}{i_j^*(\theta)}\right)\right|_{\theta=\infty} - \ith{_0^\infty P_j(\theta)i_j^*(\theta)\mathcal{H}\left(\frac{i_j(\theta,t)}{i_j^*(\theta)}\right)}                       \\
    	               & =N_jh_j(T_j^*,V^*)\mathcal{H}\left(\frac{h_j(T_j,V)}{h_j(T_j^*,V^*)}\right) - \left.\gamma_j(\theta)i_j^*(\theta)\mathcal{H}\left(\frac{i_j(\theta,t)}{i_j^*(\theta)}\right)\right|_{\theta=\infty}                                                                                                                       \\
    	               & \quad-\ith{_0^\infty P_j(\theta)i_j^*(\theta)\mathcal{H}\left(\frac{i_j(\theta,t)}{i_j^*(\theta)}\right)},
    \end{align*}
    since $\gamma'(\theta)i_j^*(\theta) + \gamma_j(\theta)\frac{\mathrm{d}i^*(\theta)}{\mathrm{d}\theta}=-P_j(\theta)i_j^*(\theta)$, $\gamma_j(0)=N_j$, and $i_j(0,t)=h_j(T_j,V)$. Then we obtain
    \begin{align*}
    	\der{W_1} & =\sum_{j=1}^nN_j\left(1-\Hss\right)\big(d_j(T_j^*-T_j)+ h_j(T_j^*,V^*)-h_j(T_j,V)\big)                                                                                                                                       \\
    	          & \quad + \sum_{j=1}^N\bigg[N_jh_j(T_j^*,V^*)\mathcal{H}\left(\frac{h_j(T_j,V)}{h_j(T_j^*,V^*)}\right) - \left.\gamma_j(\theta)i_j^*(\theta)\mathcal{H}\left(\frac{i_j(\theta,t)}{i_j^*(\theta)}\right)\right|_{\theta=\infty} \\
    	          & \quad-\ith{_0^\infty P_j(\theta)i_j^*(\theta)\mathcal{H}\left(\frac{i_j(\theta,t)}{i_j^*(\theta)}\right)}\bigg]                                                                                                              \\
    	          & \quad+\sum_{j=1}^n\ith{_0^\infty P_j(\theta)i_j(\theta,t)}-\frac{V^*}{V}\sum_{j=1}^n\ith{_0^\infty P_j(\theta)i_j(\theta,t)}                                                                                                 \\
    	          & \quad+\sum_{j=1}^nN_j\hj(T_j^*,V^*)V^*-\sum_{j=1}^nN_j\hj(T_j^*,V^*)V-qAV+qAV^* + \frac{qhAV}{h+A}-\frac{qhbA}{k}                                                                                                            \\
    	          & =\sum_{j=1}^nN_jd_j(T_j^*-T_j)\left(1-\Hss\right)-\left.\sum_{j=1}^n\gamma_j(\theta)i_j^*(\theta)\mathcal{H}\left(\frac{i_j(\theta,t)}{i_j^*(\theta)}\right) \right|_{\theta=\infty}                                         \\
    	          & \quad+\sum_{j=1}^nN_jh_j(T_j^*,V^*)\Bigg(-\Hss-\frac{h_j(T_j,V)}{h_j(T_j^*,V^*)}+\frac{h_j(T_j,V)}{h_j(T_j,V^*)}+\frac{h_j(T_j,V)}{h_j(T_j^*,V^*)}                                                                           \\
    	          & \quad-\ln\frac{h_j(T_j,V)}{h_j(T_j,V^*)}+\ln\Hss+1-\frac{V}{V^*}\Bigg)                                                                                                                                                       \\
    	          & \quad+\sum_{j=1}^n\ith{_0^\infty P_j(\theta)i_j^*(\theta)\left(-\frac{V^*i_j(\theta,t)}{Vi_j^*(\theta)}+1+\ln\frac{i_j(\theta,t)}{i_j^*(\theta)}\right)}                                                                     \\
    	          & \quad+qAV\left(\frac{h}{h+A}-1\right)+\frac{qA}{k}\left(kV^*-bh\right).
    \end{align*}
    By \textbf{(H2)}, we know that $\Hss<1$ when $T_j^*<T_j$ and $\Hss>1$ when $T_j^*>T_j$, so $(T_j^*-T_j)\left(1-\Hss\right)\le0$ for all $T_j\ge0$.
    
    It is clear that $\left.\gamma_j(\theta)i_j^*(\theta)\mathcal{H}\left(\frac{i_j(\theta,t)}{i_j^*(\theta)}\right)\right|_{\theta=\infty}\ge0$. We also have $qAV\left(\frac{h}{h+A}-1\right)\le0$ since $h\le h+A$, and $\frac{qA}{k}\left(kV^*-bh\right)=\frac{qAbh}{k}(R_{AN}-1)\le0$ since the hypothesis $R_*<1<R_0$ implies that $R_{AN}<1$. Thus,
    \begin{align*}
    	\der{W_1} & \le\sum_{j=1}^nN_jh_j(T_j^*,V^*)\Bigg(-\Hss-\frac{h_j(T_j,V)}{h_j(T_j^*,V^*)}+\frac{h_j(T_j,V)}{h_j(T_j,V^*)}+\frac{h_j(T_j,V)}{h_j(T_j^*,V^*)}          \\
    	          & \quad-\ln\frac{h_j(T_j,V)}{h_j(T_j,V^*)}+\ln\Hss+1-\frac{V}{V^*}\Bigg)                                                                                   \\
    	          & \quad-\sum_{j=1}^n\ith{_0^\infty P_j(\theta)i_j^*(\theta)\left(\frac{V^*i_j(\theta,t)}{Vi_j^*(\theta)}-1-\ln\frac{i_j(\theta,t)}{i_j^*(\theta)}\right)}. \\
    	          & =\sum_{j=1}^nN_jh_j(T_j^*,V^*)\left[\mathcal{H}\left(\frac{h_j(T_j,V)}{h_j(T_j,V^*)}\right)-\mathcal{H}\left(\frac{V}{V^*}\right)\right]                 \\
    	          & \quad-\sum_{j=1}^n\ith{_0^\infty P_j(\theta)i_j^*(\theta)\mathcal{H}\left(\frac{V^*i_j(\theta,t)}{Vi_j^*(\theta)}\right)}.
    \end{align*}
    By \textbf{(H2)} and \textbf{(H3)}, we have that $0\le V\le V^*$ implies $\frac{V}{V^*}\le\frac{h_j(T_j,V)}{h_j(T_j,V^*)}\le1$, while $0\le V^*\le V$ implies $1\le\frac{h_j(T_j,V)}{h_j(T_j,V^*)}\le\frac{V}{V^*}$. Since $\mathcal{H}(x)=x-1-\ln x$ is decreasing for $x\in(0,1)$ and increasing for $x\in(1,\infty)$, then $\mathcal{H}\left(\frac{h_j(T_j,V)}{h_j(T_j,V^*)}\right)\le\mathcal{H}\left(\frac{V}{V^*}\right)$ for all $V\ge0$. Also, non-negativity of $\mathcal{H}$ implies that $P_j(\theta)i_j^*(\theta)\mathcal{H}\left(\frac{V^*i_j(\theta,t)}{Vi_j^*(\theta)}\right)\ge0$.
    
    Thus, the above argument shows that $\der{W_1}\le0$ and that the equality holds if and only if $T_j=T_j^*$, $i_j(\theta,t)=i_j^*(\theta)$, $V=V^*$, and $A=0$. Therefore, by LaSalle's invariance principle we conclude that $E^*$ is globally asymptotically stable.
\end{proof}\medskip

Similarly, inspired in \cite{duan2017global} and \cite{wang2017age}, we make a modification to the Lyapunov functional \eqref{lyapEs} in order to prove that $\hat{E}$ is globally asymptotically stable whenever it exists.

\begin{teo}\label{teo:Ehat}
    If $R_*>1$, then the antibody-immune infected steady state $\hat{E}$ is globally asymptotically stable.
\end{teo}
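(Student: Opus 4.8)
The plan is to imitate the structure of the proof of Theorem~\ref{teo:Es}, replacing the equilibrium $E^*$ everywhere by $\hat E$ and adjusting the antibody term so that it accounts for the saturation $h+\a$ rather than the bare $h$. Concretely, I would use the Lyapunov functional
\begin{equation*}
W_2=\sum_{j=1}^nN_j\left(T_j-\t_j-\int_{\t_j}^{T_j}\Hsh[\xi_j]\;\textrm{d}\xi_j\right)
+\sum_{j=1}^n\ith{_0^\infty\gamma_j(\theta)\ih_j(\theta)\mathcal{H}\!\left(\frac{i_j(\theta,t)}{\ih_j(\theta)}\right)}
+\v\,\mathcal{H}\!\left(\frac{V}{\v}\right)+\frac{q}{k}\int_{\a}^{A}\frac{\xi-\a}{\xi}\cdot(h+\xi)\,\frac{\mathrm{d}\xi}{?}
\end{equation*}
— more precisely the antibody term should be chosen as $\frac{q}{k}\bigl(A-\a-\a\ln\frac{A}{\a}\bigr)$ up to the factor $h+\a$, i.e. a term of the form $\frac{q(h+\a)}{k}\mathcal{H}(A/\a)$ plus possibly a linear correction, selected so that its derivative cancels the cross terms $-qAV+q\a V$ and the contribution of $\frac{kAV}{h+A}-bA$. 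The functional is well-defined and has a strict global minimum at $\hat E$ by the uniform persistence established earlier (Theorem in Section~\ref{sec:semigr}) together with non-negativity of $\mathcal{H}$.

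The key steps, in order, are: (i) differentiate $W_2$ along solutions of \eqref{msys}, using $\lambda_j=d_j\t_j+h_j(\t_j,\v)$ and the identity $\gamma_j'(\theta)\ih_j(\theta)+\gamma_j(\theta)\frac{\mathrm{d}\ih_j}{\mathrm{d}\theta}=-P_j(\theta)\ih_j(\theta)$, exactly as in the proof of Theorem~\ref{teo:Es}, together with the steady-state relation $\sum_{j=1}^nN_j\hj(\t_j,\v)=c+q\a$ coming from \eqref{seh}; (ii) perform the integration by parts in the $i_j$-integral and substitute $i_j(0,t)=h_j(T_j,V)$ and $\gamma_j(0)=N_j$ to produce the term $N_jh_j(\t_j,\v)\mathcal{H}\!\left(\frac{h_j(T_j,V)}{h_j(\t_j,\v)}\right)$ minus a boundary term at $\theta=\infty$ and the $P_j\ih_j\mathcal{H}$-integral; (iii) collect all remaining algebraic terms and show, via the substitution $\v=\frac{b}{k}(h+\a)$ and the choice of the antibody weight, that the ``$A$-block'' reduces to a manifestly non-positive expression — here the relevant cancellation is that with the correct antibody term the contributions involving $A$, $V$ and $\frac{kAV}{h+A}$ combine into something like $-\frac{q\v}{h+A}(A-\a)^2/A$ or an equivalent non-positive form; (iv) bound the target-cell block using \textbf{(H2)}, which gives $(\t_j-T_j)(1-\Hsh)\le0$; (v) bound the incidence/virus block by the now-standard trick: rewrite it as $\sum_j N_jh_j(\t_j,\v)\bigl[\mathcal{H}\!\left(\tfrac{h_j(T_j,V)}{h_j(T_j,\v)}\right)-\mathcal{H}\!\left(\tfrac{V}{\v}\right)\bigr]-\sum_j\ith{_0^\infty}P_j(\theta)\ih_j(\theta)\mathcal{H}\!\left(\tfrac{\v i_j(\theta,t)}{V\ih_j(\theta)}\right)$, and invoke \textbf{(H2)}--\textbf{(H3)} to conclude $\mathcal{H}\!\left(\tfrac{h_j(T_j,V)}{h_j(T_j,\v)}\right)\le\mathcal{H}\!\left(\tfrac{V}{\v}\right)$ exactly as in Theorem~\ref{teo:Es}; (vi) conclude $\der{W_2}\le0$ with equality iff $T_j=\t_j$, $i_j(\theta,t)=\ih_j(\theta)$, $V=\v$, $A=\a$, and finish by LaSalle's invariance principle.

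I expect step~(iii) — pinning down the exact form of the antibody term in $W_2$ and verifying that its derivative, combined with the leftover $-qAV+q\a V$ and the $\frac{kAV}{h+A}-bA$ term, is non-positive — to be the main obstacle. In the immune-free case this was trivial because $A\equiv0$ at $E^*$; here the saturation $\frac{kAV}{h+A}$ rather than a bilinear $kAV$ means a naive weight $\frac{qh}{k}\mathcal{H}(A/\a)$ will not produce a clean cancellation, and one must either use the identity $\frac{kV}{h+A}\big|_{\hat E}=b$ carefully or add a compensating term. The honest expectation is that after writing $V=\v+(V-\v)$ and $A=\a+(A-\a)$ and using $\v=\frac{b}{k}(h+\a)$, the $A$-block collapses to $-\dfrac{q\,\v\,(A-\a)^2}{A(h+A)}\le0$ (or a close variant), which is exactly what is needed; confirming this precise algebraic identity is the one genuinely new computation relative to the two preceding theorems, and everything else is a direct transcription of the arguments already carried out for $E^0$ and $E^*$.
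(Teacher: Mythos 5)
Your proposal follows essentially the same route as the paper: the antibody weight is exactly the first integral you wrote, $\frac{q}{k}\int_{\a}^{A}\frac{(h+\tau)(\tau-\a)}{\tau}\,\mathrm{d}\tau$ (not the $\frac{q(h+\a)}{k}\mathcal{H}(A/\a)$ fallback, whose derivative lacks the factor $\frac{h+A}{A}$ needed to cancel the saturation denominator), and your steps (i)--(vi) are precisely the paper's proof. Concerning your flagged step (iii): with that weight, the steady-state relation $\sum_{j=1}^{n}N_j\hj(\t_j,\v)=c+q\a$, and $\v=\frac{b}{k}(h+\a)$, the leftover $A$-terms combine to $(A-\a)\left[q\v-\frac{qb}{k}(h+A)\right]=-\frac{qb}{k}(A-\a)^2\le0$, so the cancellation you anticipated does go through, in a cleaner form than your conjectured $-\frac{q\v(A-\a)^2}{A(h+A)}$ (the paper's displayed ``$\frac{qb}{k}(A-\a)^2\le0$'' is a sign typo for this expression).
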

\begin{proof}
    Consider the Lyapunov functional
    \begin{align*}
    	W_2 & =\sum_{j=1}^nN_j\left(T_j-\t_j-\int_{\t_j}^{T_j}\Hsh[\xi_j]\;\textrm{d}\xi_j\right)+ \sum_{j=1}^n\ith{_0^\infty\gamma_j(\theta)\ih_j(\theta)\mathcal{H}\left(\frac{i_j(\theta,t)}{\ih_j(\theta)}\right)} \\
    	    & \quad+\v\mathcal{H}\left(\frac{V}{\v}\right) + \frac{q}{k}\itau{_{\a}^A\frac{(h+\tau)(\tau-\a)}{\tau}},
    \end{align*}
    where $\mathcal{H}$ and $\gamma_j$ are the same as for \eqref{lyapEs}. It can be seen that the term $\frac{q}{k}\itau{_{\a}^A\frac{(h+\tau)(\tau-\a)}{\tau}}$ is positive for $A\ne\a$ and equals zero for $A=\a$, so $W_2$ has a global minimum at $\hat{E}$. We calculate the derivative of $W_2$ along the solutions of \eqref{msys} as follows:
    \begin{align*}
    	\der{W_2} & =\sum_{j=1}^nN_j\left(1-\Hsh\right)\big(\lambda_j-d_jT_j-h_j(T_j,V)\big)                                                                      \\
    	          & \quad +\sum_{j=1}^n\ith{_0^\infty\gamma_j(\theta)\left(1-\frac{\ih_j(\theta)}{i_j(\theta,t)}\right)\frac{\partial i_j(\theta,t)}{\partial t}} \\
    	          & \quad+\left(1-\frac{\v}{V}\right)\left(\sum_{j=1}^n\ith{_0^\infty P_j(\theta)i_j(\theta,t)} - cV-qAV\right)                                   \\
    	          & \quad+\frac{q}{k}(h+A)(A-\a)\left(\frac{kV}{h+A}-b\right).
    \end{align*}
    Following the same steps as in the proof of Theorem \ref{teo:Es}, we obtain
    \begin{align*}
    	\der{W_2} & =\sum_{j=1}^nN_jd_j(\t_j-T_j)\left(1-\Hsh\right)-\left.\sum_{j=1}^n\gamma_j(\theta)\ih_j(\theta)\mathcal{H}\left(\frac{i_j(\theta,t)}{\ih_j(\theta)}\right) \right|_{\theta=\infty}   \\
    	          & \quad+\sum_{j=1}^nN_jh_j(\t_j,\v)\Bigg(-\Hsh-\frac{h_j(T_j,V)}{h_j(\t_j,\v)}+\frac{h_j(T_j,V)}{h_j(T_j,\v)}+\frac{h_j(T_j,V)}{h_j(\t_j,\v)}-\ln\frac{h_j(T_j,V)}{h_j(T_j,\v)}         \\
    	          & \quad+\ln\Hsh+1-\frac{V}{\v}\Bigg)+\sum_{j=1}^n\ith{_0^\infty P_j(\theta)\ih_j(\theta)\left(-\frac{\v i_j(\theta,t)}{V\ih_j(\theta)}+1+\ln\frac{i_j(\theta,t)}{\ih_j(\theta)}\right)} \\
    	          & \quad-qAV+qA\v+\frac{q}{k}(h+A)(A-\a)\left(\frac{kV}{h+A}-b\right).
    \end{align*}
    We have $N_jd_j(\t_j-T_j)\left(1-\Hsh\right)\le0$ and $-\left.\gamma_j(\theta)i_j^*(\theta)\mathcal{H}\left(\frac{i_j(\theta,t)}{i_j^*(\theta)}\right)\right|_{\theta=\infty}\le0$. Since $\v=b(h+\a)/k$, we also have
    \begin{align*}
    	\quad-qAV+qA\v+\frac{q}{k}(h+A)(A-\a)\left(\frac{kV}{h+A}-b\right) & =-qAV+qA\v + qV(A-\a)                        \\
    	                                                                   & \quad-\frac{qb}{k}(h+A)A+\frac{qb}{k}(h+A)\a \\
    	                                                                   & = -(A-\a)\left[\frac{qb}{k}(h+A)-q\v\right]  \\
    	                                                                   & =\frac{qb}{k}(A-\a)^2\le0,
    \end{align*}
    so
    \begin{align*}
    	\der{W_2} & \le\sum_{j=1}^nN_jh_j(\t_j,\v)\Bigg(-\Hsh-\frac{h_j(T_j,V)}{h_j(\t_j,\v)}+\frac{h_j(T_j,V)}{h_j(T_j,\v)}+\frac{h_j(T_j,V)}{h_j(\t_j,\v)}-\ln\frac{h_j(T_j,V)}{h_j(T_j,\v)}           \\
    	          & \quad+\ln\Hsh+1-\frac{V}{\v}\Bigg)-\sum_{j=1}^n\ith{_0^\infty P_j(\theta)\ih_j(\theta)\left(\frac{\v i_j(\theta,t)}{V\ih_j(\theta)}-1-\ln\frac{i_j(\theta,t)}{\ih_j(\theta)}\right)} \\
    	          & =\sum_{j=1}^nN_jh_j(\t_j,\v)\left[\mathcal{H}\left(\frac{h_j(T_j,V)}{h_j(T_j,\v)}\right)-\mathcal{H}\left(\frac{V}{\v}\right)\right]                                                 \\
    	          & \quad-\sum_{j=1}^n\ith{_0^\infty P_j(\theta)\ih_j(\theta)\mathcal{H}\left(\frac{\v i_j(\theta,t)}{V\ih_j(\theta)}\right)}                                                            \\
    	          & \le0.
    \end{align*}
    Thus, $\der{W_2}\le0$ and the equality holds if and only if $T_j=\t_j$, $i_j(\theta,t)=\ih_j(\theta)$, $V=\v$, and $A=\a$. Therefore, by LaSalle's invariance principle we conclude that $\hat{E}$ is globally asymptotically stable.
\end{proof}\medskip

\section{Related models}\label{sec:rel}

In this section, we consider a special case in which the model \eqref{msys} can be reduced to a system of ordinary differential equations with one time delay. For that, we will use a particular form of incidence rate, which is given by the saturated incidence function $h_j(T_j,V)=\frac{\beta_jT_j(t)V(t)}{1+\alpha_jV(t)}$.

We will assume that it takes time $\tau$ for virus to enter into the target cell and that there is an intracellular delay $\omega$ that describes the time required for an infected cell to produce virus, so the death rate of infected cells and viral production rate are given by the functions
\begin{equation}\label{dj}
    \delta_j(\theta)=
    \begin{cases}
    	d_j        & \text{if }0\le\theta<\tau, \\
    	\delta_j^* & \text{if }\theta\ge\tau
    \end{cases}
\end{equation}
and
\begin{equation}\label{pj}
    P_j(\theta)=
    \begin{cases}
        0     & \text{if }0\le\theta<\omega, \\
        P_j^* & \text{if }\theta\ge\omega.
    \end{cases}
\end{equation}
We also assume that the initial condition $i_j(\theta,0)=i_j^0(\theta)$ satisfies $\lim\limits_{\theta\to\infty}i_j^0(\theta)=0$, which means that the initial number of infected cells tends to 0 as the infection age tends to infinity.

If $I_j(t)=\ith{_0^\infty i_j(\theta,t)}$ is the number of infected cells in the $j$-th class at time $t$, then
\begin{align*}
	\der{I_j(t)} & =\ith{_0^\infty\frac{\partial i_j(\theta,t)}{\partial t}}                                                   \\
	             & =-\ith{_0^\infty\left(\frac{\partial i_j(\theta,t)}{\partial\theta} + \delta_j(\theta)i_j(\theta,t)\right)} \\
	             & =-i_j(\theta,t)\Big|_{\theta=0}^{\theta=\infty} - \ith{_0^\infty\delta_j(\theta)i_j(\theta,t)},
\end{align*}
but by \eqref{ij} we have
\[\lim_{\theta\to\infty}i_j(\theta,t)=
\lim_{\theta\to\infty}i_j^0(\theta-t)\frac{\sigma_j(\theta)}{\sigma_j(\theta-t)}
=\lim_{\theta\to\infty}i_j^0(\theta-t)e^{-\delta_j^*t}=0,
\]
so
\begin{align*}
	\der{I_j(t)} & =i_j(0,t) - \ith{_0^\infty\delta_j(\theta)i_j(\theta,t)}                                                            \\
	             & =\frac{\beta_jT_j(t)V(t)}{1+\alpha_jV(t)}-\ith{_0^\tau d_ji_j(\theta,t)}-\ith{_\tau^\infty\delta_j^*i_j(\theta,t)}.
\end{align*}
Using the function \eqref{pj} we obtain
\begin{align*}
	\ith{_0^\infty P_j(\theta)i_j(\theta,t)} & =P_j^*\ith{_\omega^\infty i_j(\theta,t)}                                    \\
	                                         & =P_j^*\ith{_\omega^\infty i_j(\theta-\omega,t-\omega)e^{-\delta_j(\theta)\omega}}.
\end{align*}
Thus, with these assumptions, the model can be reformulated equivalently as the following system:
\begin{equation}
\begin{aligned}
	\der{T_j(t)} & = \lambda_j-d_jT_j(t)-\frac{\beta_jT_j(t)V(t)}{1+\alpha_jV(t)}                                                      \\
	\der{I_j(t)} & = \frac{\beta_jT_j(t)V(t)}{1+\alpha_jV(t)}-\ith{_0^\tau d_ji_j(\theta,t)}-\ith{_\tau^\infty\delta_j^*i_j(\theta,t)} \\
	\der{V(t)}   & = \sum_{j=1}^n P_j^*\ith{_\omega^\infty i_j(\theta-\omega,t-\omega)e^{-\delta_j(\theta)\omega}} - cV(t)-qA(t)V(t)   \\
	\der{A(t)}   & = \frac{kA(t)V(t)}{h+A(t)}-bA(t).
\end{aligned}
\end{equation}
In this model we have two intracellular time delays, $\tau$ and $\omega$. If we consider the case when the death rate of infected cells is constant, i.e., $\delta_j(\theta)=\delta_j^*$ for all $\theta\ge0$, then the above system can be further simplified to the following system of delay differential equations:
\begin{equation}\label{dde}
\begin{aligned}
	\der{T_j(t)} & = \lambda_j-d_jT_j(t)-\frac{\beta_jT_j(t)V(t)}{1+\alpha_jV(t)}           \\
	\der{I_j(t)} & = \frac{\beta_jT_j(t)V(t)}{1+\alpha_jV(t)}-\delta_j^*I_j(t)              \\
	\der{V(t)}   & = \sum_{j=1}^n P_j^*e^{-\delta_j^*\omega}I_j(t-\omega) - cV(t)-qA(t)V(t) \\
	\der{A(t)}   & = \frac{kA(t)V(t)}{h+A(t)}-bA(t),
\end{aligned}
\end{equation}
which contains no integral terms and only one time delay. In this case, we have $\sigma_j(\theta)=e^{-\delta_j^*\theta}$ and $N_j=P_j^*e^{-\delta_j^*\omega}/\delta_j^*$, so the basic reproduction numbers of virus and antibodies for \eqref{dde} can be calculated as
\[R_0=\frac1c\sum_{j=1}^n\frac{P_j^*e^{-\delta_j^*\omega}\lambda_j\beta_j}{d_j\delta_j^*},\qquad
R_*=\frac1{c}\sum_{j=1}^n\frac{kP_j^*e^{-\delta_j^*\omega}\lambda_j\beta_j}{\delta_j^*\big(kd_j+(d_j\alpha_j+\beta_j)bh\big)}.\]

\section{Numerical examples}\label{sec:num}

We will now perform some numerical simulations to illustrate the analytical results obtained for the dynamics of model \eqref{msys}.

\subsection{Model with one class of target cells}

\begin{table} 
    \centering
    \begin{tabular}{|lll|}
        \hline
        Parameter    & Value   & Units                                                     \\ \hline
        $\lambda_1$  & 46      & $\text{cells}\cdot\text{ml}^{-1}\cdot\text{day}^{-1}$     \\
        $d_1$        & 0.0046  & day$^{-1}$                                                \\
        $\alpha_1$   & 0.005   & --                                                        \\
        $\delta_1^*$ & 0.01    & day$^{-1}$                                                \\
        $P_1^*$      & 11.4059 & $\text{virions}\cdot\text{day}^{-1}\cdot\text{cell}^{-1}$ \\
        $\omega$     & 0.5     & days                                                      \\
        $c$          & 0.25    & day$^{-1}$                                                \\
        $q$          & 0.03    & $\mu\text{g}^{-1}\cdot\text{day}^{-1}$                    \\
        $k$          & 0.0015  & $\text{virion}^{-1}\cdot\text{day}^{-1}$                  \\
        $h$          & 0.2     & --                                                        \\
        $b$          & 2.9     & day$^{-1}$                                                \\ \hline
    \end{tabular}
    \caption{Parameters used for numerical simulations of system \eqref{dde} with one class of target cells.}\label{tab:1}
\end{table}

We first simulate the case $n=1$ for system \eqref{msys} with the particular forms of $\delta_1(\theta)$ and $P_1(\theta)$ used in Section \ref{sec:rel} to obtain the simplified system \eqref{dde}. Hence, we are considering a model with intracellular delay $\omega$ in viral production, constant death rate $\delta^*$ for infected cells, and saturated incidence rate. We use the parameters shown in Table \ref{tab:1}, which are similar to those used in \cite{pawelek2012model,wang2014global,tian2015stability} for models of HIV infection with only one population of target cells.

We consider three different values for the infection rate $\beta$, given in virions/day, in order to obtain the three possible scenarios for the dynamics of the model. For each case, we show the graphs of solutions with respect to time for several initial conditions, which are given in cells/ml for $T(t)$ and $I(t)$, in virions/ml for $V(t)$, and in $\mu$g for $A(t)$.

\begin{figure}
    \centering
    \includegraphics[width=.9\linewidth]{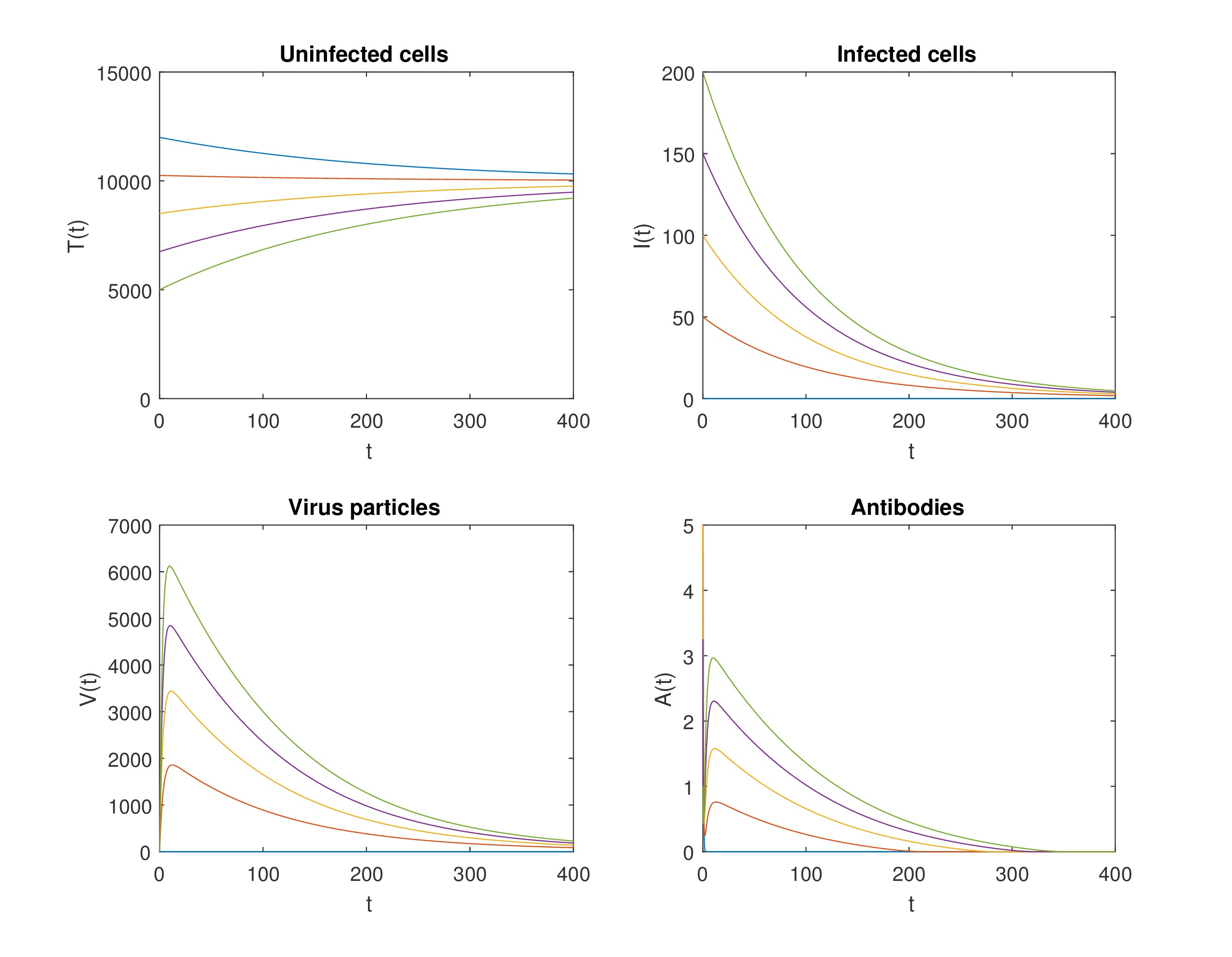}
    \caption{Solutions of system \eqref{dde} when $R_*<R_0<1$ and $E^0$ is globally asymptotically stable.}
    \label{fig:e0}
\end{figure}
\begin{figure}
    \centering
    \includegraphics[width=.9\linewidth]{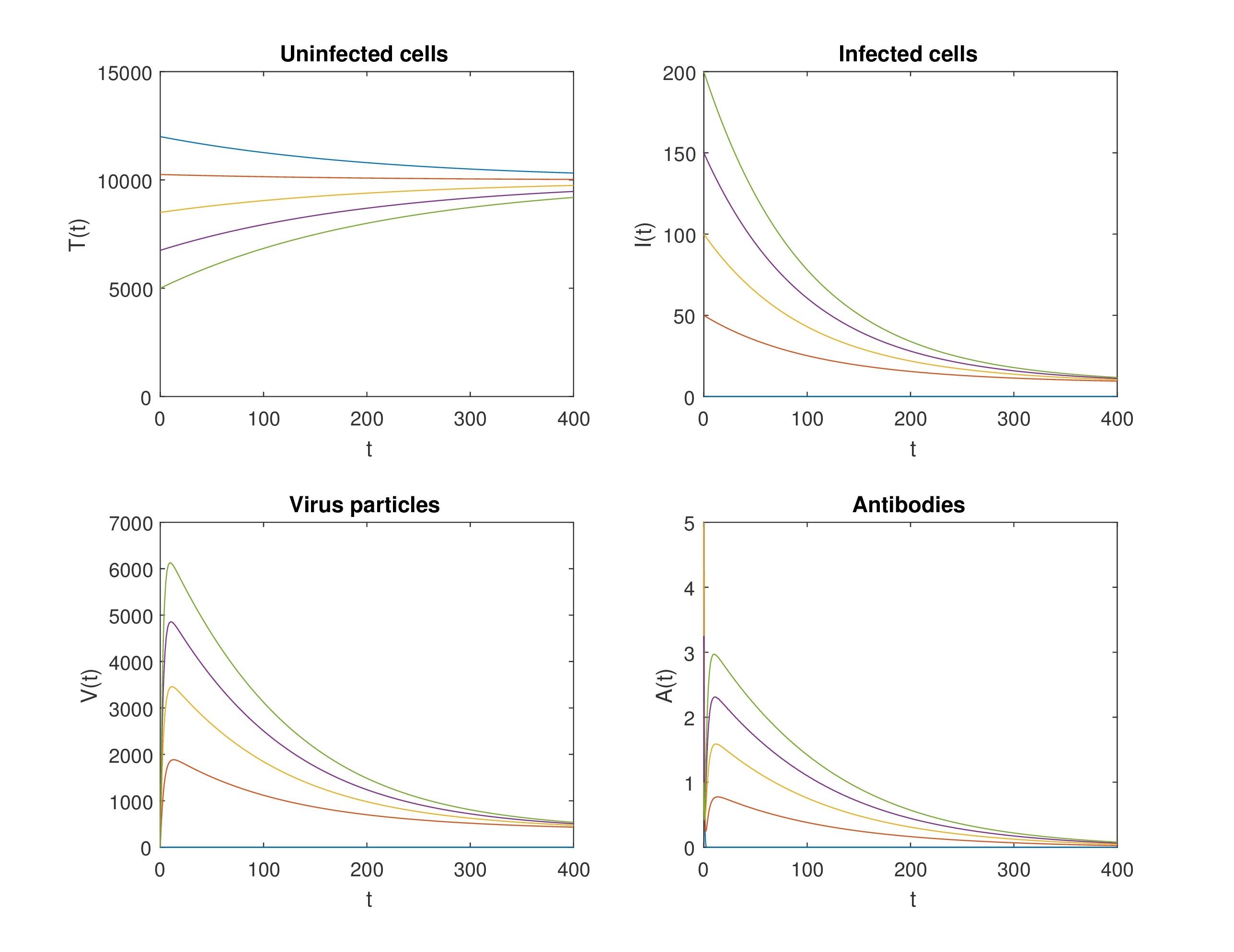}
    \caption{Solutions of system \eqref{dde} when $R_*<1<R_0$ and $E^*$ is globally asymptotically stable.}
    \label{fig:es}
    \includegraphics[width=.9\linewidth]{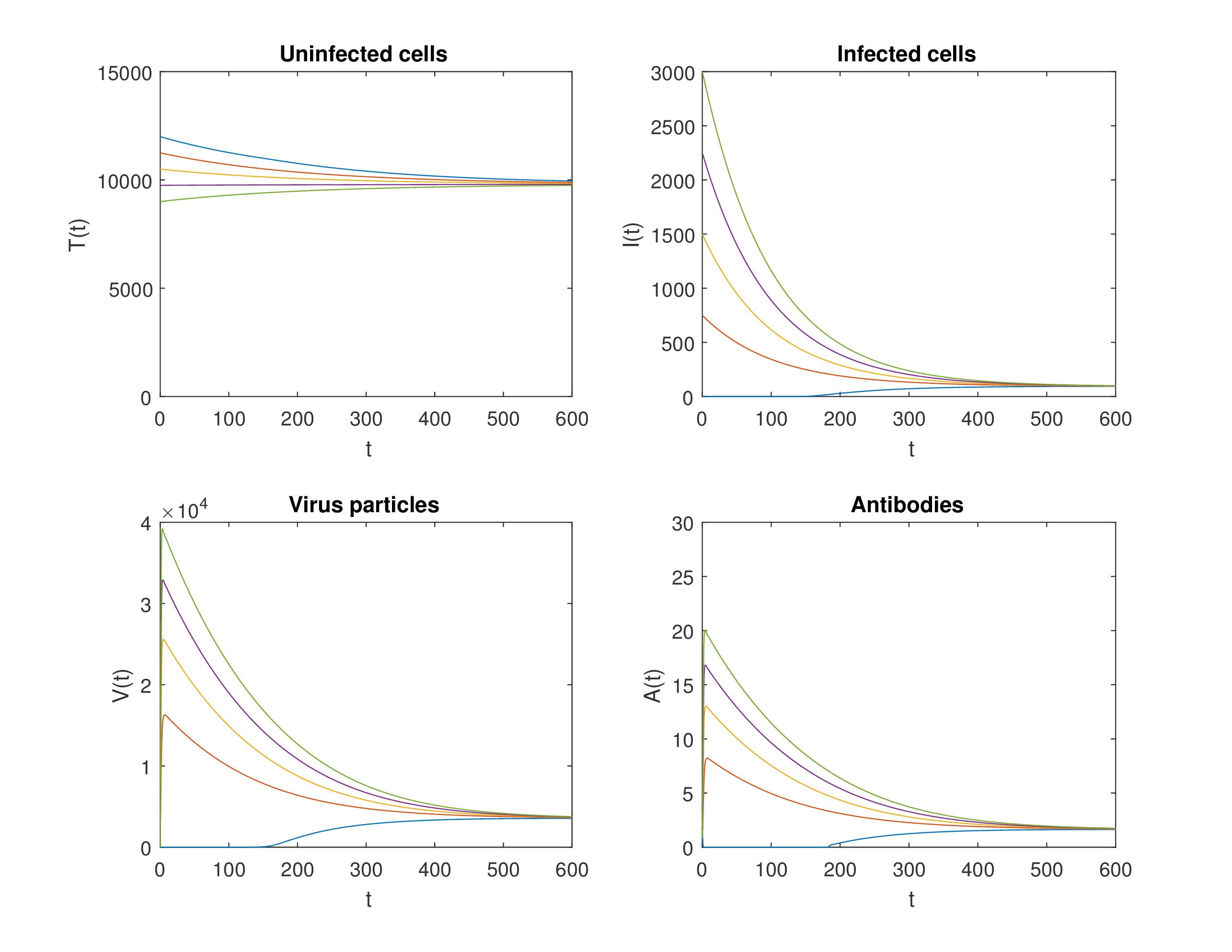}
    \caption{Solutions of system \eqref{dde} when $1<R_*<R_0$ and $\hat{E}$ is globally asymptotically stable.}
    \label{fig:ehat}
\end{figure}

We first consider the case when $\beta=1\times10^{-8}$, obtaining $R_0=0.4540$ and $R_*=0.1547$, which are less than 1. According to Theorem \ref{teo:E0}, all solutions approach the uninfected equilibrium $E^0$ as $t\to\infty$, as we can see in Figure \ref{fig:e0}.

For $\beta=5\times10^{-8}$, we have $R_0=2.7238$, $R_*=0.9270$, so according to Theorem \ref{teo:Es} all solutions converge to the immune-free infected equilibrium $E^*$, as we can see in Figure \ref{fig:es}.

Lastly, if we take $\beta=5\times10^{-7}$, then we have $R_0=22.6980$, $R_*=7.6287$, so according to Theorem \ref{teo:Ehat} all solutions converge to the immune infected equilibrium $\hat{E}$, as shown in Figure \ref{fig:ehat}.

\subsection{Model with two classes of target cells}

We will now assume that there are two different populations of target cells: CD4$^+$ T cells (denoted by the subscript $j=1$) and macrophages ($j=2$).

For the CD4$^+$ T cells, we consider a viral production kernel $P_1(\theta)$ similar to that used in \cite{rong2007mathematical}, which is given by
\begin{equation*}
    P_1(\theta)=
    \begin{cases}
    	0                                           & \text{if }0\le\theta<\theta_1, \\
    	P_1^*\left(1-e^{-r(\theta-\theta_1)}\right) & \text{if }\theta\ge\theta_1,
    \end{cases},
\end{equation*}
where $P_1^*=6.4201\times10^3$ $\text{virions}\cdot\text{day}^{-1}\cdot\text{cell}^{-1}$, while $r=1$ is a saturation parameter and $\theta_1=0.25$~days is the age at which reverse transcription is completed. We use a death rate for infected cells $\delta_1(\theta)$ of the form \eqref{dj} with $d_1=0.0046$~day$^{-1}$, $\delta_1^*=1.5$~day$^{-1}$ and a saturated infection rate $h_1(T_1,V)=\frac{\beta_1T_1V}{1+\alpha_1V}$ with $\alpha_1=0.005$, $\beta_1=2.4\times10^{-8}$ $\text{virions}\cdot\text{day}^{-1}$.

For the macrophages, we use a constant death rate $\delta_2(\theta)=1/14.1$~day$^{-1}$ and we assume that viral particles at produced at a rate $P_2(\theta)=0.1+10^{3\times0.00028\theta}$ \cite{duffin2002mathematical}. We assume that the infection rate is given by the function $h_2(T_2,V)=\frac{1.19T_2V}{10^6+V}$ \cite{wasserstein2010mathematical}. The rest of the parameters used are given in Table \ref{tab:2} and are based on \cite{rong2007mathematical,duffin2002mathematical,tian2015stability}.

Figure \ref{fig:simnew} shows the solutions for each variable of system \eqref{msys} with the above parameters, using the initial conditions $T_1(0)=5000$, $T_2(0)=10^6$, $i_1(\theta,0)=i_2(\theta,0)=\left(1.194\times10^{-4}\right)e^{-10\theta}$, $V(0)=0.1$, $A(0)=0.1$. In this case, the basic reproduction numbers are $R_1=17.8200$ for the population of CD4$^+$ T cells and $R_2=0.2963$ for macrophages, so the total basic reproduction number of virus is $R_0=R_1+R_2=18.1163$. From this, we can see that the relative contribution of T cells to infection is considerably stronger than that of macrophages. The viral reproduction number with antibody response is $R_*= 6.3637$. Since $R_0>1$ and $R_*>1$, all solutions converge to a positive steady state, which corresponds to the chronic phase of HIV, as can clearly be seen in the simulations.

\begin{table}
    \centering
    \begin{tabular}{|lll|}
    	\hline
    	Parameter   & Value  & Units                                                 \\ \hline
    	$\lambda_1$ & $10^4$ & $\text{cells}\cdot\text{ml}^{-1}\cdot\text{day}^{-1}$ \\
    	$d_1$       & 0.01   & day$^{-1}$                                            \\
    	$\lambda_2$ & 8640   & $\text{cells}\cdot\text{ml}^{-1}\cdot\text{day}^{-1}$ \\
    	$d_2$       & 0.024  & day$^{-1}$                                            \\
    	$c$         & 23     & day$^{-1}$                                            \\
    	$q$         & 0.03   & $\mu\text{g}^{-1}\cdot\text{day}^{-1}$                \\
    	$k$         & 0.0015 & $\text{virion}^{-1}\cdot\text{day}^{-1}$              \\
    	$h$         & 0.2    & --                                                    \\
    	$b$         & 2.9    & day$^{-1}$                                            \\ \hline
    \end{tabular}
\caption{Parameters used for numerical simulations of system \eqref{msys} with two classes of target cells.}\label{tab:2}
\end{table}

\begin{figure}
    \centering
    \includegraphics[width=\linewidth]{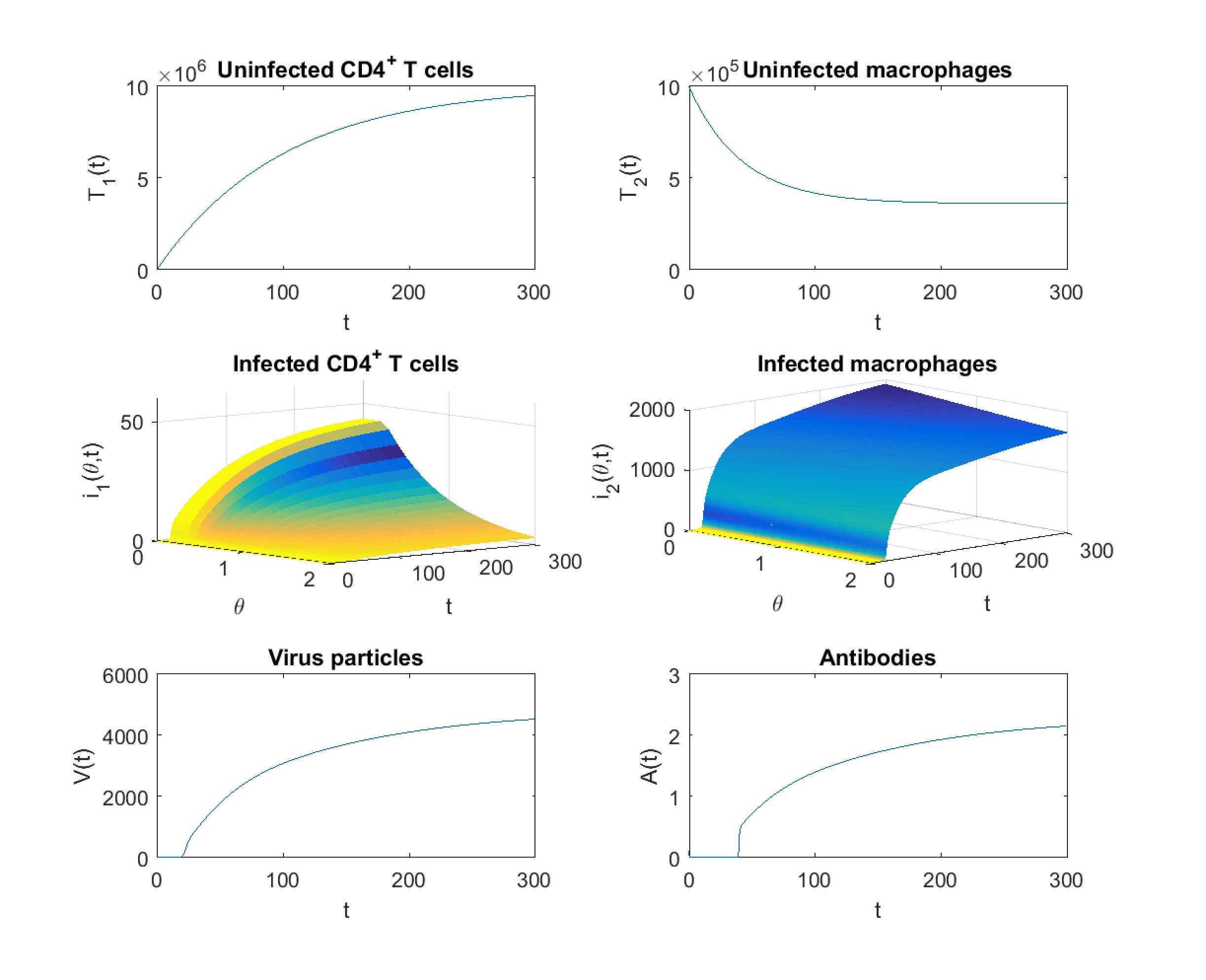}
    \caption{Time evolution of the solution of system \eqref{msys} with two classes of target cells.}
    \label{fig:simnew}
\end{figure}

\section{Conclusions}\label{sec:concl}

In this paper, we have studied an age-structured within-host viral dynamic model that includes multiple populations of target cells. Our model incorporates saturated antibody immune response and a general non-linear incidence rate, so it can be viewed as a generalization of several models that have been studied previously in the literature. This kind of models can be used to evaluate the relative contribution of viral production from different compartments of target cells, such as CD4$^+$ cells, macrophages and dendritic cells, and this can help us improve our understanding of the dynamics of HIV infection.

We have extended the results published in \cite{duan2017global}, where the authors studied a similar model with a saturated incidence that did not include multiple classes of target cells. We proved that the global dynamics of the model is completely determined by the basic reproduction number and the reproductive number of antibody response. If $R_0$ is less than one, the only steady state of the model is the infection-free equilibrium and the infection is predicted to die out. When $R_*<1<R_0$, an infected equilibrium $E^*$ appears which is globally asymptotically stable, so infection becomes chronic but with no persistent antibody immune response. Lastly, if $R_*>1$, the antibody-immune infected equilibrium $\hat{E}$ appears and all solutions converge to it; this case corresponds to the scenario when the infection becomes chronic with a persistent antibody immune response.

\printbibliography[heading=bibintoc,title={References}]

\end{document}